\providecommand{\R}{\mathbb{R}}
\providecommand{\N}{\mathbb{N}}
\providecommand{\Sp}{\mathbb{S}}
\providecommand{\B}{\mathbb{B}}
\providecommand{\st}{\, :\ }
\providecommand{\R}{\mathbb{R}}
\providecommand{\C}{\mathbb{C}}
\providecommand{\N}{\mathbb{N}}
\providecommand{\Sp}{\mathbb{S}}
\providecommand{\B}{\mathbb{B}}
\providecommand{\st}{\, :\ }
\providecommand{\hsiang}{H}
\providecommand{\torgrp}{G}
\providecommand{\football}{F}
\providecommand{\ccone}{C}
\providecommand{\dist}{\rho}
\providecommand{\maxdst}{R}
\providecommand{\jop}{J}
\providecommand{\alencar}{A}
\providecommand{\dir}{{\mathrm{D}}}
\providecommand{\neu}{{\mathrm{N}}}
\providecommand{\rob}{{\mathrm{R}}}
\providecommand{\dbdy}{\partial_\dir}
\providecommand{\nbdy}{\partial_\neu}
\providecommand{\rbdy}{\partial_\rob}
\providecommand{\inttext}{\mathrm{int}}
\providecommand{\exttext}{\mathrm{ext}}
\providecommand{\intbdy}{\partial_\inttext}
\providecommand{\extbdy}{\partial_{\exttext}}
\providecommand{\dint}[1]{#1^{\dir_\inttext}}
\providecommand{\nint}[1]{#1^{\neu_\inttext}}
\DeclarePairedDelimiter\abs{\lvert}{\rvert}
\DeclarePairedDelimiter\sk{\langle}{\rangle}
\DeclarePairedDelimiter\interval{]}{[}
\DeclarePairedDelimiter\IntervaL{[}{]}
\DeclareMathOperator{\Ogroup}{O}
\DeclareMathOperator{\SOgroup}{SO}
\DeclareMathOperator{\Ugroup}{U}
\DeclareMathOperator{\Ind}{Ind}
\DeclareMathOperator{\Area}{area}
\DeclareMathOperator{\genus}{genus}
\DeclareMathOperator{\II}{I\hspace*{-.5pt}I}
\DeclareMathOperator{\sgn}{sgn}
\newtheoremstyle{plain}
  {\topsep}   
  {0}         
  {\itshape}  
  {0pt}       
  {\bfseries} 
  {.}         
  {5pt plus 1pt minus 1pt} 
  {}          
\newtheoremstyle{remark}
  {\topsep}   
  {0}         
  {\normalfont}
  {0pt}       
  {\itshape}  
  {.}         
  {5pt plus 1pt minus 1pt} 
  {}          
\theoremstyle{plain}
\newtheorem{thm}{Theorem}[section]
\newtheorem{pro}[thm]{Proposition}
\newtheorem{lem}[thm]{Lemma}
\newtheorem{cor}[thm]{Corollary}
\theoremstyle{definition}
\newtheorem{xmpl}[thm]{Example}
\theoremstyle{remark}
\newtheorem{rem}[thm]{Remark}
\title{Index growth not imputable to topology}
\author{Alessandro Carlotto, Mario B. Schulz, David Wiygul}
\date{\vspace*{-4ex}}
\newcommand\printaddress{{
\setlength{\parindent}{17pt}
\par\bigskip
\bigskip
\vbox{
{\scshape Alessandro Carlotto}
\newline Universit\`a di Trento, 
Dipartimento di Matematica,
via Sommarive 14, 
38123 Povo di Trento, 
Italy
\newline
\textit{E-mail address:} 
\texttt{alessandro.carlotto@unitn.it}
\par\medskip
{\scshape Mario B. Schulz}
\newline Universit\`a di Trento, 
Dipartimento di Matematica,
via Sommarive 14, 
38123 Povo di Trento, 
Italy
\newline
\textit{E-mail address:} 
\texttt{mario.schulz@unitn.it}
\par\medskip
{\scshape David Wiygul}
\newline Universit\`a di Trento, 
Dipartimento di Matematica,
via Sommarive 14, 
38123 Povo di Trento, 
Italy
\newline
\textit{E-mail address:} 
\texttt{davidjames.wiygul@unitn.it}
\par
}
}}
\begin{document}

\maketitle

\begin{abstract}
We employ partitioning methods, in the spirit of Montiel--Ros but here recast for general actions of compact Lie groups, to prove effective lower bounds on the Morse index of certain families of closed minimal hypersurfaces in the round four-dimensional sphere, and of free boundary minimal hypersurfaces in the Euclidean four-dimensional ball. Our analysis reveals, in particular, phenomena of linear index growth for sequences of minimal hypersurfaces of fixed topological type, in strong contrast to the three-dimensional scenario.
\end{abstract}

\section{Introduction}\label{sec:intro}

In recent years significant efforts have been devoted to comparing different ``measures of complexity'' of minimal hypersurfaces, both in the closed and in the free boundary case. 
As a good starting point for our discussion, we shall recall how it has been conjectured by Schoen (and then, in more general forms by Marques and Neves) that in any compact, boundaryless manifold the Morse index of any closed minimal hypersurface should grow at least linearly with the first Betti number of the hypersurface in question (see \cite{Nev14}). Over the years at least three different methodologies have been developed to obtain bounds of that type: averaged energy estimates on the Euclidean components of harmonic forms (\cite{Sav10, AmbCarSha18, AmbCarSha18fbms}), Montiel--Ros partitioning methods for the lower spectrum of Schr\"odinger operators (\cite{MontielRos, KapouleasWiygulIndex, CarSchWiy23}) and covering arguments ``at the scale of the index'' (\cite{GrigNetrYau, Son23, CorFra24}); we refer the reader to the bibliography of such references for further contributions to the theme, especially for what concerns the special setting of complete minimal surfaces in $\R^3$.

 Although this conjecture is still open in its full generality, some of the aforementioned contributions have shed light on  certain important special instances. This is, in particular, the case for closed minimal surfaces in the round three-dimensional sphere (henceforth denoted $\Sp^3$) as established by Savo \cite{Sav10},  and for free boundary minimal surfaces in the Euclidean ball (henceforth denoted $\mathbb{B}^3$) thanks to the work of the first-named author with Ambrozio and Sharp \cite{AmbCarSha18fbms} and of Sargent \cite{Sar17}.

The converse inequality is, at least for closed minimal surfaces in three-dimensional manifolds of positive Ricci curvature, comparatively well-understood: Eijiri--Micallef \cite[Theorem 4.3]{EjiMic08} proved that having a bound on the topology \emph{and on the area} implies a bound on the Morse index, i.\,e.
\[
\Ind(\Sigma)\leq\kappa\bigl(\Area(\Sigma)+\genus(\Sigma)-1\bigr)
\]
where $\kappa$ is a computable numerical constant; hence, in combination with earlier work by Choi--Wang \cite[Proposition 4]{ChoWan83} ensuring that $\Area(\Sigma)\leq 16\pi (\genus(\Sigma)+1)/\lambda$ (where $\lambda$ denotes a positive lower bound for the Ricci curvature of the ambient manifold in question) one derives \emph{a linear upper bound for the Morse index in terms of the genus}. In fact, analogous results hold true for free boundary minimal surfaces, under suitable curvature assumptions, by virtue of the work of Lima \cite{Lim17}. In particular, it turns out that in both model cases, $\Sp^3$ and $\mathbb{B}^3$, we do in fact have two-sided bounds.

How about the higher-dimensional scenario? The goal of the present article is to show that even in the simplest possible ambient manifolds (i.\,e.~space forms) there exist minimal hypersurfaces having fixed topological type and arbitrarily large Morse index. 
Our analysis is quantitative, and exploits the Montiel--Ros methodology as developed in our previous work \cite{CarSchWiy23} (itself inspired by \cite{MontielRos, KapouleasWiygulIndex}), with the main results here recast in even greater generality, so as to handle also the case of continuous group actions (more generally and precisely: those associated to compact Lie groups). 
Such tools, which may well be of independent interest and ample applicability, are the object of Section \ref{sec:theory}.

The analysis of Hsiang's \cite{Hsiang1983} minimal hypersurfaces in $\Sp^4$ is the main object of Section \ref{sec:closed}. These hypersurfaces can be parametrized by the number of connected components in the intersection with the ``Clifford football'' in $\Sp^4$ (as defined in Section \ref{sec:closed}).
If we let $\hsiang_{m}$ denote the hypersurface with $m$ tori of intersection, then one can prove that the index grows at least linearly with $m$, with explicit estimates (see Theorem~\ref{thm:mainHyperspheres}).
In particular, in the case when the integer $m$ is odd, our result refers to the sequence of minimal hyperspheres produced by Hsiang to disporove Chern's spherical Bernstein conjecture. 

Lastly, in Section \ref{sec:fbms} we rather focus on the study of the Morse index of free boundary minimal hypersurfaces in the four-dimensional Euclidean ball $\B^4$, and specifically on the equivariant examples obtained by Freidin--Gulian--McGrath in \cite{FGM}, which in turn are suitable portions of the complete examples obtained by Alencar in \cite{Alencar}; see the statement of Theorem \ref{thm:mainTori} therein, which summarizes the outcome of our analysis. The aforementioned hypersurfaces are all solid tori, i.\,e.~they are all differomorphic to $\B^2\times\Sp^1$.

As will be apparent from the sequel of this manuscript, our arguments can easily be adapted to handle a large number of other examples in higher-dimensional round spheres and Euclidean balls, with fairly simple modifications. We note, incidentally, that the results we obtain have a fairly neat interpretation in terms of Choe's vision number \cite{ChoeVision90}. 
We further note that the minimal hypersurfaces in any of the sequences we consider have uniformly bounded area (volume) and so the unboundedness of the corresponding values of the Morse indices follows by appealing to Sharp's compactness theorem \cite{Sha17} or, for the free boundary case, from \cite{AmbCarSha18comp}. That being said, our methods here aim to provide much more refined information, both concerning the ``asymptotic growth rate'' of the index, and about the (possibly sharp) lower bounds, which may in principle be an important ingredient towards classification results e.\,g.~for the first few elements of the sequences in question.

All in all, while there remains the (hard) question of understanding the actual realm of validity of the Schoen--Marques--Neves conjecture, it thus clearly emerges that in ambient manifolds of dimension at least four the growth of the Morse index of minimal hypersurfaces may be imputable to different causes, other than the sole topological complexity. A deeper and more general understanding of these phenomena is an interesting challenge for the future.

\paragraph{Acknowledgments.} 
The authors thank the anonymous referee for their valuable suggestions. 
This project has received funding from the European Research Council (ERC) under the European Union’s Horizon 2020 research and innovation programme (grant agreement No.~947923). 
The research of M.\,S.~was funded by the Deutsche Forschungsgemeinschaft (DFG, German Research Foundation) under Germany's Excellence Strategy EXC 2044 -- 390685587, Mathematics M\"unster: Dynamics--Geometry--Structure, and the Collaborative Research Centre CRC 1442, Geometry: Deformations and Rigidity.

\section{Index estimates via partitioning methods for general group actions}\label{sec:theory}

Let $M$ be a smooth compact connected Riemannian manifold
(equipped with an unnamed metric),
with (possibly empty) boundary $\partial M$,
and let $\Omega$ be an open subset of $M$
with (possibly empty) Lipschitz boundary $\partial \Omega$.
We assume that $M$ has dimension at least one.
We will write $\eta$ 
for the outward unit normal to $\partial \Omega$,
defined almost everywhere thereon
with respect to the $(\dim(M)-1)$-dimensional Hausdorff measure
induced by the metric on $M$.

We will consider boundary-value problems on $\Omega$
for the Schr\"{o}dinger operator
$\Delta+q$, with $q$ a smooth real-valued function on $M$.
In general the boundary data will be mixed:
partially homogeneous Dirichlet,
partially homogeneous Neumann,
and partially Robin (or oblique).
To express these conditions
we assume the existence
of three pairwise disjoint
open subsets
$
  \dbdy\Omega,\nbdy\Omega,\rbdy\Omega
  \subseteq \partial \Omega
$
whose closures cover $\partial \Omega$.
The Robin condition,
to be applicable on $\rbdy\Omega$,
will be specified by a smooth function
$r\colon M\to\R$.
Momentarily we will give a precise,
weak formulation of the intended boundary value problem,
but for a sufficiently smooth solution $u$
the Robin condition will require
$\eta u = ru$ at almost every point of $\rbdy\Omega$.
(The homogeneous Neumann condition
could also be enforced via selection of $r$
and $\rbdy\Omega$,
but for our purposes
it is more convenient
to impose the homogeneous Neumann condition
explicitly on $\nbdy\Omega$.)

Working with the $\dim(M)$-dimensional
Hausdorff measure on $\Omega$ induced by the metric on $M$,
we write $L^2(\Omega)$ and $H^1(\Omega)$
for the standard Hilbert spaces
of (equivalence classes of) complex-valued functions on $\Omega$
which are square integrable
and which, for the latter space,
have square integrable derivatives,
relative to the metric on $M$.
Similarly,
now working with the $(\dim(M) - 1)$-dimensional
Hausdorff measure on $\partial\Omega$ induced by the metric on $M$,
we define $L^2(\rbdy\Omega)$.
We further set
\begin{equation*}
  H^1_{\dbdy\Omega}(\Omega)
  \vcentcolon=
  \{u \in H^1(\Omega) \st u|_{\dbdy\Omega}=0\},
\end{equation*}
where $u|_{\dbdy\Omega}$ is the restriction to $\dbdy\Omega$
of the trace of $u$.

Regarding $M$ (including its metric) along
with the functions $q$ and $r$ as fixed
(rather than as in \cite{CarSchWiy23}*{\S\,2.3},
where we employed more verbose notation,
indicating also these last data,
which we suppress here),
we associate to the data
$(\Omega, \dbdy\Omega, \nbdy\Omega, \rbdy\Omega)$
the sesquilinear form
\begin{equation}
\label{eqn:sesqui-def}
\begin{gathered}
  T[\Omega;\dbdy\Omega,\nbdy\Omega,\rbdy\Omega]
    \colon
    H^1_{\dbdy\Omega}(\Omega) \times H^1_{\dbdy\Omega}(\Omega)
    \to
    \C
  \\
  (u,v)
    \mapsto
    \sk[\big]{u,v}_{H^1(\Omega)}
      -\sk[\big]{u,(1+q)v}_{L^2(\Omega)}
      -\sk[\big]{u|_{\rbdy\Omega}, rv|_{\rbdy\Omega}}_{L^2(\rbdy\Omega)}.
\end{gathered}
\end{equation}
We will abbreviate this form by $T$
as long as there is no danger of confusion
as to the underlying data.

We call $\lambda \in \C$ an eigenvalue of $T$
if there exists nonzero $u \in H^1_{\dbdy\Omega}(\Omega)$
such that for all $v \in H^1_{\dbdy\Omega}(\Omega)$
there holds $T(u,v)=\lambda \sk{u,v}_{L^2(\Omega)}$,
in which case $u$ is called an eigenfunction of $T$
with eigenvalue $\lambda$.
Given an eigenvalue $\lambda$,
the span of all eigenfunctions of $T$ with eigenvalue $\lambda$
is called the eigenspace of $T$ with eigenvalue $\lambda$.
We have a spectral theorem for $T$:
the set of eigenvalues of $T$
is a discrete subset of $\R$
bounded below and unbounded above,
each eigenspace is finite-dimensional,
the eigenspaces are pairwise orthogonal in $L^2(\Omega)$,
and the eigenfunctions are dense
in $L^2(\Omega)$ and $H^1_{\dbdy\Omega}(\Omega)$.

For any $t \in \R$
and for any binary relation
$\mathord\sim \in \{\mathord=,\mathord<,\mathord\leq,\mathord>,\mathord\geq\}$
we write $E^{\sim t}(T)$ for the closure in $L^2(\Omega)$
of the span of all eigenfunctions of $T$
with eigenvalue $\lambda \sim t$,
and we write $\pi_T^{\sim t}$ for the $L^2(\Omega)$ projection
onto the closed subspace $E^{\sim t}$.

Now let $G$ be a compact Lie group acting on $M$.
We assume that the action
$G \times M \ni (g,p) \mapsto g.p \in M$
is smooth and via isometries of $M$
and that $q(g.p)=q(p)$ and $r(g.p)=r(p)$
for all $g \in G$ and $p \in M$.
We further assume
that $G$ preserves (setwise)
each of the sets $\Omega$,  $\dbdy\Omega$, $\nbdy\Omega$, and $\rbdy\Omega$.
Additionally we let
$\sigma \colon G \to \Ugroup(1)$
be a Lie group homomorphism from $G$
to the unitary group on $\C$.

We will say that a function $u \in L^2(\Omega)$
is $(G,\sigma)$-invariant if
$\sigma(g)u(g^{-1}.p)=u(p)$
for all $g \in G$ and almost every $p \in M$.
(In the important special case that $\sigma \equiv 1$
is the trivial homomorphism
we say $G$-invariant rather than $(G,1)$-invariant.)
Then the subspace of $(G,\sigma)$-invariant
functions in $L^2(\Omega)$ is closed
and the $L^2(\Omega)$ projection
$\pi_{G,\sigma}$ onto it is given by
\begin{equation*}
  (\pi_{G,\sigma}u)(p)
  =
  \int_G \sigma(g)u(g^{-1}.p) \, dg,
\end{equation*}
where $dg$ indicates
the left-invariant Haar measure
on $G$ for which $\int_G \, dg = 1$.
Moreover,
$\pi_{G,\sigma}|_{H^1(\Omega)}$
is the $H^1(\Omega)$ projection
onto the subspace of $(G,\sigma)$-invariant functions
in $H^1(\Omega)$, there holds
\begin{equation*}
  T(v,\pi_{G,\sigma}u)
  =
  T(u,\pi_{G,\sigma}v)
  \mbox{ for all } u,v \in H^1_{\dbdy\Omega}(\Omega),
\end{equation*}
and $\pi_{G,\sigma}$ commutes with each $\pi_T^{\sim t}$.
Given any $t \in \R$ and $\mathord\sim\in\{\mathord=,\mathord<,\mathord\leq,\mathord>,\mathord\geq\}$,
we define
\begin{equation}
\label{eqn:eigenspan-def}
  E_{G,\sigma}^{\sim t}(T)
  \vcentcolon=
  \pi_{G,\sigma}(E^{\sim t}(T)).
\end{equation}

We recall from Remark 2.2 in \cite{CarSchWiy23} that the space of $(G,\sigma)$-invariant functions could be, without any additional assumptions on the group and the associated twisting homomorphisms, finite-dimensional
(and even zero-dimensional). As a result, one could observe a trivialization of the eigenspaces above, and thus the sequences of corresponding eigenvalues could become finite (possibly of cardinality zero).

\begin{xmpl}\label{ref:example}
Suppose (for this example) that $M$ is a two-sided, connected embedded hypersurface
with a global unit normal $\nu$
in some ambient Riemannian manifold $N$,
suppose that $G$ is a subgroup of the isometry group of $N$
each of whose elements preserves $M$ as a set,
and define $\sigma=\sgn_\nu\colon G \to \Ogroup(1)<\Ugroup(1)$ by
\begin{equation*}
  \sgn_\nu(g)
  =
  \begin{cases}
    1 &\mbox{if } g_*\nu=\nu
    \\
    -1 &\mbox{if } g_*\nu=-\nu.
  \end{cases}
\end{equation*}
\end{xmpl}
In this context we will write
$G$-equivariant in place of $(G,\sgn_\nu)$-invariant.
When $M$ is a minimal hypersurface in $N$,
possibly subject to a boundary condition,
and $T$ is the corresponding stability (or Jacobi form),
then by the $G$-equivariant index of $M$
we mean $\dim E_{G,\sgn_\nu}^{<0}(T)$.
In all of the applications to follow in this paper
(in Sections \ref{sec:closed} and \ref{sec:fbms})
the homomorphism $\sgn_\nu$,
for the groups $G$ that we consider,
will always be the trivial homomorphism,
and so the notions of $G$-equivariant
and $G$-invariant coincide.

Now let $\Omega_1 \subset \Omega$
be another Lipschitz domain of $M$.
We define 
\begin{equation*}
\begin{aligned}
\intbdy\Omega_1
  &\vcentcolon=
  \partial\Omega_1 \cap \Omega,
\qquad
&\extbdy\Omega_1
  &\vcentcolon=
  \partial \Omega_1 
    \setminus
    \overline{\intbdy\Omega_1},
\\[1ex]
\dint{\dbdy}\Omega_1
  &\vcentcolon=
  (\extbdy\Omega_1 \cap \dbdy\Omega)
    \cup \intbdy\Omega_1,
\qquad
&\nint{\dbdy}\Omega_1
  &\vcentcolon=
  \extbdy\Omega_1 \cap \dbdy\Omega,
\\[1ex]
\dint{\nbdy}\Omega_1
  &\vcentcolon=
  \extbdy\Omega_1 \cap \nbdy\Omega,
\qquad
&\nint{\nbdy}\Omega_1
  &\vcentcolon=
  (\extbdy\Omega_1 \cap \nbdy\Omega)
    \cup \intbdy\Omega_1,
\\[1ex]
\dint{\rbdy}\Omega_1
  &\vcentcolon=
  \extbdy\Omega_1 \cap \rbdy\Omega,
\qquad
&\nint{\rbdy}\Omega_1
  &\vcentcolon=
  \extbdy\Omega_1 \cap \rbdy\Omega,
\end{aligned}
\end{equation*}
where $\overline{\intbdy\Omega_1}$
is the closure of $\intbdy\Omega_1$ in $\partial \Omega_1$.
We make these definitions
in order to pose
two different sets of boundary conditions on $\Omega_1$.
In both cases
$\partial\Omega_1$ inherits
whatever boundary condition is in effect
on $\partial\Omega$ wherever the two overlap
(corresponding to $\extbdy\Omega_1$).
The two sets of conditions are distinguished
by placing either
the Dirichlet or the Neumann condition
on the rest of the boundary
(corresponding to $\intbdy{\Omega_1}$).
Associated to these two sets of conditions
are the bilinear forms
\begin{equation}
\label{eqn:internalized-sesqui-def}
\begin{aligned}
\dint{T}_{\Omega_1}
  &\vcentcolon=
  T[
    \Omega_1;
    \dint{\dbdy}\Omega_1,
    \dint{\nbdy}\Omega_1,
    \dint{\rbdy}\Omega_1
  ],
\\[1ex]
\nint{T}_{\Omega_1}
  &\vcentcolon=
  T[
    \Omega_1;
    \nint{\dbdy}\Omega_1,
    \nint{\nbdy}\Omega_1,
    \nint{\rbdy}\Omega_1
  ],
\end{aligned}
\end{equation}
defined, respectively, on the Sobolev spaces
$H^1_{\dint{\dbdy}\Omega_1}(\Omega_1)$
and $H^1_{\nint{\dbdy}\Omega_1}(\Omega_1)$.

The following statement extends to compact Lie groups and complex-valued homomorphisms Proposition 3.1 in \cite{CarSchWiy23}, namely the most fundamental tool we employed to prove our index estimates,
in the more general terms anticipated in Remark 2.1 therein.

\begin{pro}
\label{pro:mr}
Let $M$ be a smooth compact connected Riemannian manifold
with (possibly empty) boundary.
Let $G$ be a compact Lie group
acting smoothly on $M$ by isometries,
and let $\sigma \colon G \to \Ugroup(1)$
be a homomorphism of Lie groups.
Let $q,r \colon M \to \R$ be smooth functions invariant under $G$.
Let $\Omega \subset M$ be a connected open subset of $M$,
and let $\Omega_1,\ldots,\Omega_n \subset \Omega$
be pairwise disjoint open subsets of $M$
whose closures cover $\Omega$.
Assume further that each of the preceding $n+1$ open sets
has Lipschitz boundary
(possibly with $\partial\Omega = \emptyset$)
and is preserved as a set by every element of $G$.
Let $\dbdy\Omega, \nbdy\Omega, \rbdy\Omega \subseteq \partial \Omega$
be pairwise disjoint open subsets of $\partial\Omega$
whose closures cover $\partial\Omega$
and each of which is preserved as a set by every element of $G$.

Define $T\vcentcolon=T[\Omega;\dbdy\Omega,\nbdy\Omega,\rbdy\Omega]$
as in \eqref{eqn:sesqui-def},
and for each $\Omega_i$,
define $\dint{T}_{\Omega_i}$ and $\nint{T}_{\Omega_i}$
as in \eqref{eqn:internalized-sesqui-def}.
Then, recalling \eqref{eqn:eigenspan-def}
for the definition of the following spaces of functions,
for every $t \in \R$ we have
\begin{enumerate}[label={\normalfont(\roman*)}]
\item \label{mrLower}
$\displaystyle
 \dim E_{G,\sigma}^{<t}(T)
 \geq
 \dim E_{G,\sigma}^{<t}(\dint{T}_{\Omega_1})
   +\sum_{i=2}^n \dim E_{G,\sigma}^{\leq t}(\dint{T}_{\Omega_i})
$, 
\item \label{mrUpper}
$\displaystyle
 \dim E_{G,\sigma}^{\leq t}(T)
 \leq
 \dim E_{G,\sigma}^{\leq t}(\nint{T}_{\Omega_1})
   +\sum_{i=2}^n \dim E_{G,\sigma}^{<t}(\nint{T}_{\Omega_i})
$.
\end{enumerate}
\end{pro}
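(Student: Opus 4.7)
The plan is to adapt the template of Proposition 3.1 in \cite{CarSchWiy23} -- where the analogous statement was established for finite groups acting through real characters -- to the setting of a compact Lie group $G$ with a unitary twist $\sigma$. The equivariant input I would rely on is already recorded in the discussion preceding the statement: $\pi_{G,\sigma}$, defined by integration against the normalised Haar measure on $G$, is an $L^2$-orthogonal projection which commutes with each $\pi_T^{\sim t}$, so $E_{G,\sigma}^{\sim t}(T)$ coincides with the $(G,\sigma)$-invariant subspace of $E^{\sim t}(T)$. Combined with the $G$-invariance of each $\Omega_i$ and of the three boundary strata, this will guarantee that the extension-by-zero and restriction operations used below preserve $(G,\sigma)$-invariance, so every counting argument carried out in $E^{\sim t}(T)$ automatically descends to $E_{G,\sigma}^{\sim t}(T)$.

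For (i) I would fix $(G,\sigma)$-invariant bases of $E_{G,\sigma}^{<t}(\dint{T}_{\Omega_1})$ and of each $E_{G,\sigma}^{\leq t}(\dint{T}_{\Omega_i})$ for $i\geq 2$, extend every such function $w_i$ by zero to $\Omega$ -- legitimate in $H^1_{\dbdy\Omega}(\Omega)$ precisely because the Dirichlet condition on $\intbdy\Omega_i$ has been absorbed into $\dint{\dbdy}\Omega_i$ -- and bundle the resulting functions into a subspace $V\subset H^1_{\dbdy\Omega}(\Omega)$. Disjointness of supports yields $T(v,v)=\sum_i \dint{T}_{\Omega_i}(w_i,w_i)$ and $\|v\|_{L^2(\Omega)}^2=\sum_i \|w_i\|_{L^2(\Omega_i)}^2$, so $T(v,v)\leq t\|v\|^2$ with strict inequality whenever the $\Omega_1$-component $w_1$ is nonzero. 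To upgrade this to the desired bound it suffices to show $V\cap E^{\geq t}(T)=\{0\}$: any nonzero $v$ in this intersection must satisfy $T(v,v)=t\|v\|^2$, forcing $w_1=0$ and $v\in E^{=t}(T)$; interior elliptic regularity then makes $v$ a smooth solution of $(\Delta+q+t)v=0$ on the connected open set $\Omega$ vanishing on the nonempty open subset $\Omega_1$, in contradiction with the classical unique continuation principle. Projection along $E^{\geq t}(T)$ therefore injects $V$ into $E^{<t}(T)$, and since $V$ sits in the $(G,\sigma)$-invariant subspace the same count persists in $E^{<t}_{G,\sigma}(T)$.

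For (ii) I would dualise the picture: given $u\in E^{\leq t}_{G,\sigma}(T)$, each restriction $u_i:=u|_{\Omega_i}$ lies in $H^1_{\nint{\dbdy}\Omega_i}(\Omega_i)$, is still $(G,\sigma)$-invariant, and a direct reassembly of the Robin boundary terms yields $T(u,u)=\sum_i \nint{T}_{\Omega_i}(u_i,u_i)$ together with $\|u\|_{L^2(\Omega)}^2=\sum_i \|u_i\|_{L^2(\Omega_i)}^2$. I would then consider the map $\Psi$ sending $u$ to the tuple whose first component is the $L^2(\Omega_1)$-projection of $u_1$ onto $E_{G,\sigma}^{\leq t}(\nint{T}_{\Omega_1})$ and whose $i$-th component for $i\geq 2$ is the $L^2(\Omega_i)$-projection of $u_i$ onto $E_{G,\sigma}^{<t}(\nint{T}_{\Omega_i})$. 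If $\Psi(u)=0$ then $u_1\in E_{G,\sigma}^{>t}(\nint{T}_{\Omega_1})$ and each $u_i\in E_{G,\sigma}^{\geq t}(\nint{T}_{\Omega_i})$, so the additive identity gives $T(u,u)\geq t\|u\|^2$ with strict inequality unless $u_1=0$; combined with $T(u,u)\leq t\|u\|^2$ one infers $u_1=0$ and $u\in E^{=t}(T)$, and a second application of unique continuation forces $u\equiv 0$. Hence $\Psi$ is injective on the $(G,\sigma)$-invariant subspace and (ii) follows by a dimension count. The substantive step, as in the finite-group case, is precisely the unique-continuation argument at the borderline eigenvalue $\lambda=t$; everything else amounts to the essentially formal verification that Haar averaging against $\sigma$ commutes with the Dirichlet extension, the restriction, and the spectral projections on each piece.
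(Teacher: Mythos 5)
Your proposal is correct and follows essentially the same route as the paper: the paper's proof is a one-line citation asserting that the argument for Proposition~3.1 of \cite{CarSchWiy23} carries over verbatim under the generalized hypotheses, and your write-up is a faithful reconstruction of that Montiel--Ros partitioning argument (extension-by-zero test space plus injectivity of the spectral projection via unique continuation for~(i), the dual restriction map for~(ii)), correctly supplemented by the observations that $\pi_{G,\sigma}$ commutes with the spectral projections and that the $G$-invariance of the $\Omega_i$ and of the boundary strata makes extension and restriction preserve $(G,\sigma)$-invariance.
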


\begin{proof}
The proof of \cite{CarSchWiy23}*{Proposition 3.1} goes through verbatim,
even with the assumptions on $(G,\sigma)$ generalized as above,
and working with vector spaces over $\C$ rather than $\R$.
\end{proof}

For the purposes of our later applications, it will also be convenient to spell out the following basic ``comparison'' result. We wish to stress that the conclusion holds true irrespective of any sign assumption on the Robin potential in play.

\begin{pro}\label{pro:RobDir}
Let $M$ be a smooth compact connected Riemannian manifold
with boundary $\partial M \neq \emptyset$ and with interior $\mathring{M}$.
Let $G$ be a compact Lie group
acting smoothly on $M$ by isometries,
and let $\sigma \colon G \to \Ugroup(1)$
be a homomorphism of Lie groups.
Let $q,r \colon M \to \R$ be smooth functions invariant under $G$.
Define
\begin{equation*}
\begin{aligned}
  T_\dir
  &\vcentcolon=
  T[
    \mathring{M};
    \dbdy\mathring{M}=\partial M,
    \nbdy\Omega=\emptyset,
    \rbdy\Omega=\emptyset
  ],
  \\
  T_\rob
  &\vcentcolon=
  T[
    \mathring{M};
    \dbdy\mathring{M}=\emptyset,
    \nbdy\Omega=\emptyset,
    \rbdy\Omega=\partial M
  ]
\end{aligned}
\end{equation*}
as in \eqref{eqn:sesqui-def},
and let $\lambda \in \R$.
Then, recalling \eqref{eqn:eigenspan-def}
for the definition of the following spaces of functions,
we have
\begin{equation*}
  \dim E_{G,\sigma}^{< \lambda}(T_\rob)
  \geq
  \dim E_{G,\sigma}^{\leq \lambda}(T_\dir).
\end{equation*}
\end{pro}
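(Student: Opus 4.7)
The plan is to derive the desired inequality from the injectivity of a natural $L^2$-orthogonal projection. Within the closed subspace of $(G,\sigma)$-invariant elements of $L^2(\mathring M)$, the spectral theorem for $T_\rob$ combined with the fact (recorded just above the proposition) that $\pi_{G,\sigma}$ commutes with every spectral projection $\pi_{T_\rob}^{\sim t}$ yields the orthogonal decomposition of this subspace into $E_{G,\sigma}^{<\lambda}(T_\rob) \oplus E_{G,\sigma}^{\geq\lambda}(T_\rob)$. The finite-dimensional subspace $E_{G,\sigma}^{\leq\lambda}(T_\dir)$ sits inside this decomposition, and the asserted dimension inequality will follow once one shows that the $L^2$-projection onto the first summand is injective when restricted to $E_{G,\sigma}^{\leq\lambda}(T_\dir)$, which is equivalent to the vanishing
\[
E_{G,\sigma}^{\leq\lambda}(T_\dir)
\cap
E_{G,\sigma}^{\geq\lambda}(T_\rob)
=
\{0\}.
\]

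So let $u$ be any element of the above intersection. On the one hand, $u$ expands as a finite $L^2$-orthogonal sum of Dirichlet eigenfunctions with eigenvalue at most $\lambda$, so $u \in H^1_{\partial M}(\mathring M)$ and $T_\dir(u,u) \leq \lambda \|u\|_{L^2}^2$, with equality precisely when $u$ lies in the Dirichlet $\lambda$-eigenspace. On the other hand, since $u$ lies in the $L^2$-closure of the span of Robin eigenfunctions with eigenvalue at least $\lambda$ and in addition belongs to $H^1(\mathring M)$, the spectral expansion for $T_\rob$ yields $T_\rob(u,u) \geq \lambda \|u\|_{L^2}^2$, with equality precisely when $u$ lies in the Robin $\lambda$-eigenspace. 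The crucial observation is that the vanishing trace $u|_{\partial M}=0$ annihilates the Robin boundary integral in \eqref{eqn:sesqui-def}, hence $T_\rob(u,u) = T_\dir(u,u)$. Sandwiching the two previous bounds forces both of them to be equalities, so that $u$ is simultaneously a Dirichlet and a Robin eigenfunction with common eigenvalue $\lambda$.

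The only genuinely nontrivial step is then to deduce from this compatibility that $u$ vanishes identically: this is where unique continuation enters, and is the single place where the connectedness of $M$ is actually used. Indeed $u$ solves the elliptic equation $\Delta u + (q+\lambda)u = 0$ throughout the connected open set $\mathring M$, while the combination of the Dirichlet condition $u|_{\partial M} = 0$ and the Robin condition $\eta u|_{\partial M} = r u|_{\partial M}$ delivers vanishing Cauchy data $u|_{\partial M} = \eta u|_{\partial M} = 0$ along the entirety of the nonempty smooth boundary $\partial M$. Aronszajn-type unique continuation then forces $u \equiv 0$, closing the argument. Note that no sign assumption on the potential $r$ is ever used in the reasoning, consistent with the emphasis in the remark preceding the statement; everything else is essentially bookkeeping against the variational and equivariant structures already developed in the preceding pages.
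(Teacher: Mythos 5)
Your proof is correct and follows essentially the same route as the paper: reduce to injectivity of $\pi_{T_\rob}^{<\lambda}$ on $E_{G,\sigma}^{\leq\lambda}(T_\dir)$ (phrased by you as triviality of the intersection with $E_{G,\sigma}^{\geq\lambda}(T_\rob)$, which is equivalent), observe that the Robin boundary term vanishes on functions with zero trace so $T_\rob(u,u)=T_\dir(u,u)$, sandwich the Rayleigh quotients to obtain a simultaneous Dirichlet--Robin $\lambda$-eigenfunction, and invoke unique continuation from the vanishing Cauchy data. The paper's argument is the same in substance and in nearly all details.
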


\begin{proof}
By the commutativity
of $\pi_{T_\rob}^{< \lambda}$ and $\pi_{G,\sigma}$ 
we have 
\begin{equation*}
  \pi_{T_\rob}^{< \lambda}\bigl(E_{G,\sigma}^{\leq \lambda}(T_\dir)\bigr)
  \subseteq
  E_{G,\sigma}^{< \lambda}(T_\rob).
\end{equation*}
It suffices therefore to check
that the restriction of $\pi_{T_\rob}^{< \lambda}$
to $E_{G,\sigma}^{\leq \lambda}(T_\dir)$
is injective.
Suppose then that $u \in E_{G,\sigma}^{\leq \lambda}(T_\dir)$
is orthogonal in $L^2(\mathring{M})$
to $E^{<\lambda}_{G,\sigma}(T_\rob)$,
and so in fact orthogonal to all of $E^{<\lambda}(T_\rob)$
(given that $u$ itself is $(G,\sigma)$-invariant,
so orthogonal to the orthogonal complement of
$\pi_{G,\sigma}(L^2(\mathring{M}))$).
Then $u \in E^{\geq \lambda}(T_\rob)$, so
\begin{equation*}
  \sk{u,u}_{H^1(\mathring{M})}
    -\sk{u,(1+q)u}_{L^2(\mathring{M})}
    -\sk{u|_{\partial M}, ru|_{\partial M}}_{L^2(\partial M)}
  =
  T_\rob(u,u)
  \geq
  \lambda \sk{u,u}_{L^2(\mathring{M})},
\end{equation*}
but, since
$
  u
  \in
  E_{G,\sigma}^{\leq \lambda}(T_\dir)
  \subset
  H^1_{\partial M}(\mathring{M})
$,
simultaneously
\begin{equation*}
  T_\rob(u,u)
  =
   \sk{u,u}_{H^1(\mathring{M})}
    -\sk{u,(1+q)u}_{L^2(\mathring{M})}
  =
  T_\dir(u,u)
  \leq
  \lambda \sk{u,u}_{L^2(\mathring{M})}.
\end{equation*}
Thus $T_\rob(u,u)=T_\dir(u,u)=\lambda \sk{u,u}_{L^2(\mathring{M})}$,
and in fact
$u \in E^{=\lambda}(T_\dir) \cap E^{=\lambda}(T_\rob)$.
It follows that $u$ is smooth
and satisfies $(\Delta+q+\lambda)u=0$
on $\mathring{M}$ and $u = \eta u = 0$ on $\partial M$.
From this Cauchy data on the boundary
and the unique continuation principle
we conclude that $u=0$,
ending the proof.
\end{proof}

\section{Applications to closed minimal hypersurfaces in round spheres}\label{sec:closed}

To put things in context, let us start with some recollections about what is known on closed (\emph{embedded}) minimal hypersurfaces in the round four-dimensional sphere $\Sp^4$. 
Zhou \cite{Zhou20} proved in 2020 the so-called \emph{multiplicity one conjecture} (crucially relying on the earlier work of many, and primarily on the rich theory developed by Marques and Neves), which implies that $\Sp^4$ contains a sequence $\{\Sigma_i\}_{i\in\N}$ of closed minimal hypersurfaces, having area (volume) diverging as $i\to\infty$ precisely as prescribed by the Weyl law of Liokumovich--Marques--Neves \cite{LioMarNev18}, but with only limited topological control (if any) and no effective lower bounds on the Morse index of such hypersurfaces. 
That said, to the best of our knowledge, only very few ``explicit'' examples are known. Among these, we first mention the equatorial spheres $\Sp^3$ that have Morse index equal to 1, and the Clifford-type products $\Sp^2(\sqrt{2/3})\times \Sp^1(\sqrt{1/3})$ (that have Morse index equal to 6) plus the two associated equivariant families constructed by Hsiang in \cite{Hsiang1983}, to be described below. To move beyond such instances, we shall recall Cartan's work \cite{Cartan1939} on the construction and classification of cubic isoparametric minimal
hypersurfaces in spheres; it turns out that there is precisely one in $\Sp^4$, and that is topologically a quotient
of $\SOgroup(3)$ by a suitable action of
$\mathbb{Z}_2\times\mathbb{Z}_2$. Solomon then proved in \cite{Sol90} that its Morse
index is equal to $20$: 
this seems to be the only case, besides equatorial hyperspheres and Clifford-type hypersurfaces, where this invariant has been computed explicitly. 
More recently, the first and second author of the present paper proved in \cite{CarSch21} the existence of one (conjecturally unique) minimal hypertorus in $\Sp^4$ (together with infinitely many immersed ones); the problem of computing its Morse index is open, but we shall note here that -- as a corollary of \cite[Theorem~4.3]{PerdomoLowIndex} -- such a hypertorus has Morse index at least $8$; we will get back to this matter later in this section, see in particular Remark \ref{rem:MarioNumerics} below.

That said, let $\torgrp$ denote the group
(isomorphic to $\Ogroup(2) \times \Ogroup(2)$)
of all isometries of $\Sp^4 \subset \R^5$
preserving (setwise) each of the great circles
$\{x^3=x^4=x^5=0\}$ and $\{x^1=x^2=x^5=0\}$.
Each of the two points $(0,0,0,0,\pm 1)$
is fixed by $\torgrp$.
These two antipodal points
lie on each of the great spheres
$\{x^3=x^4=0\}$ and $\{x^1=x^2=0\}$;
regarding these points as poles for these two spheres,
each of the corresponding circles of latitude
(having constant $x^5$)
is also an orbit of $\torgrp$.
Every other orbit of $\torgrp$
(aside from the preceding two fixed points
and two families of circular orbits)
is a torus.

Define also in $\Sp^4$ the $\torgrp$-invariant set
\begin{equation*}
  \football
  \vcentcolon=
  \bigl\{
    (x^1,x^2,x^3,x^4,x^5) \in \Sp^4
    \st
    (x^1)^2+(x^2)^2 = (x^3)^2+(x^4)^2
  \bigr\},
\end{equation*}
which, away from the two singular points $(0,0,0,0,\pm 1)$,
is an embedded minimal hypersurface; this is the so-called \emph{Clifford football} we alluded to in the introduction.

In \cite{Hsiang1983}
Hsiang constructed for each integer $m \geq 1$
a closed embedded $\torgrp$-invariant minimal hypersurface $\hsiang_m$
whose intersection with $\football$
consists of $m$ tori
and which contains precisely two circular orbits of~$\torgrp$.
When $m$ is even, both of these circles lie
in either $\{x^1=x^2=0\}$ or $\{x^3=x^4=0\}$,
and $\hsiang_m$ is homeomorphic to $\Sp^2 \times \Sp^1$;
when $m$ is odd, each of the preceding two spheres
contains exactly one of the circular orbits on $\hsiang_m$,
which in this case is homeomorphic to $\Sp^3$. We also wish to stress that in Hsiang's construction $\hsiang_1$ precisely coincides with the only $\torgrp$-invariant totally geodesic hypersphere (corresponding to the $r$-bisector, cf. \cite[Section 4]{Hsiang1983}).

For each $m$ we pick on $\hsiang_m$ a unit normal
$\nu_m=(\nu_m^1,\nu_m^2,\nu_m^3,\nu_m^4,\nu_m^5)$,
and we write $\jop_m$ for the Jacobi operator of $\hsiang_m$.
The function $\nu_m^5$ is $\torgrp$-invariant.

\begin{lem}\label{lem:Savo}
For each integer $m \geq 2$
the set $\{\nu_m^i\}_{i=1}^5$
of functions on $\hsiang_m$
is linearly independent
and
each is an eigenfunction of $\jop_m$ with eigenvalue $-3$.
\end{lem}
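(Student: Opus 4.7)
The plan is to verify the eigenvalue claim and the linear independence claim separately.

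The eigenvalue identity $\jop_m\nu_m^i=-3\nu_m^i$ is a standard consequence of the Weingarten-and-Codazzi calculus for minimal hypersurfaces in space forms. In a local orthonormal frame $\{e_j\}_{j=1}^3$ on $\hsiang_m$, with $\bar\nabla$ the ambient Euclidean connection on $\R^5$ and $A$ the second fundamental form of $\hsiang_m$, I would combine the Weingarten formula $\bar\nabla_{e_j}\nu_m=-\sum_k A_{jk}e_k$ with the identity $\bar\nabla_{e_j}e_k=\nabla^{\hsiang_m}_{e_j}e_k+A_{jk}\nu_m-\delta_{jk}x$ (where $x$ denotes the position vector in $\R^5$). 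Invoking the Codazzi equation $\nabla_j A_{kl}=\nabla_k A_{jl}$ (free of curvature terms in the space form $\Sp^4$) together with minimality $\sum_j A_{jj}=0$, a short calculation yields $\sum_j\bar\nabla_{e_j}\bar\nabla_{e_j}\nu_m=-|A|^2\nu_m$, hence componentwise $\Delta\nu_m^i=-|A|^2\nu_m^i$ on $\hsiang_m$. Since $\Ric(\nu,\nu)=3$ in $\Sp^4$, the sesquilinear form \eqref{eqn:sesqui-def} with $q=|A|^2+3$ then identifies $\nu_m^i$ as an eigenfunction with eigenvalue $-3$.

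For linear independence, let $N=\{a\in\R^5\st\langle a,\nu_m\rangle\equiv 0\text{ on }\hsiang_m\}$. By the $\torgrp$-equivariance of $\nu_m$, $N$ is a $\torgrp$-invariant subspace of $\R^5$. The decomposition $\R^5=V_1\oplus V_2\oplus V_3$ into the pairwise inequivalent irreducible $\torgrp$-representations $V_1=\R e_1\oplus\R e_2$, $V_2=\R e_3\oplus\R e_4$, $V_3=\R e_5$ then reduces the problem to excluding each possibility $V_i\subseteq N$. I would parameterize the generating curve of $\hsiang_m$ in the orbit-space lune $\{(r,s,h)\st r,s\geq 0,\ r^2+s^2+h^2=1\}$ (with $r=\sqrt{(x^1)^2+(x^2)^2}$, $s=\sqrt{(x^3)^2+(x^4)^2}$, $h=x^5$) as $t\mapsto(r(t),s(t),h(t))$, noting that, by $\torgrp$-equivariance, the unit normal at a generic point of $\hsiang_m$ takes the form $\nu_m=((\alpha/r)x^1,(\alpha/r)x^2,(\beta/s)x^3,(\beta/s)x^4,\gamma)$, where $(\alpha,\beta,\gamma)\in\R^3$ is forced (up to sign) to equal $(s\dot h-h\dot s,\,h\dot r-r\dot h,\,r\dot s-s\dot r)$ along the curve, being the unit vector orthogonal to both $(r,s,h)$ and $(\dot r,\dot s,\dot h)$.

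With this setup, $V_1\subseteq N$ would force $\alpha\equiv 0$, whence either $h\equiv 0$ (giving $\hsiang_m=\hsiang_1$, ruled out by $m\geq 2$) or $s=ch$ along the curve for some nonzero constant $c$, placing the curve on a great circle of the orbit-space sphere that meets the Clifford-football locus $\{r=s\}$ at a single interior point of the lune, contradicting $m\geq 2$ interior intersections. The case $V_2\subseteq N$ is handled analogously by interchanging the roles of $r$ and $s$. Finally, $V_3\subseteq N$ would force $\gamma\equiv 0$, whence $s=cr$ along the curve: for $c=1$ the curve coincides with the football projection, forcing $\hsiang_m=\football$, impossible since $\football$ has non-removable singularities at $\pm e_5\notin\hsiang_m$; for $c\neq 1$ the curve meets $\{r=s\}$ only on the lune boundary, contradicting $m\geq 1$. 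Hence $N=\{0\}$, establishing linear independence. The main obstacle is this last case analysis, where one must carefully combine the $\torgrp$-representation-theoretic reduction with the explicit orbit-space geometry of Hsiang's curves; the eigenvalue step itself is routine.
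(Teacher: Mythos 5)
Your proof is correct but takes a genuinely different route from the paper on both halves. For the eigenvalue identity, the paper introduces the cone $\widehat{\Sigma}\subset\R^5$ over $\hsiang_m$, observes that $\widehat{\nu}^i(x)=\nu^i(x/\abs{x})$ is a radially-constant Jacobi field on the cone, and restricts the cone's Jacobi equation to the unit-sphere slice; you instead run a direct Weingarten--Codazzi computation on $\hsiang_m\subset\Sp^4\subset\R^5$ to obtain $\Delta\nu_m^i=-\abs{A}^2\nu_m^i$ and read off the eigenvalue $-3$ from the form \eqref{eqn:sesqui-def} with $q=\abs{A}^2+\Ric(\nu,\nu)=\abs{A}^2+3$. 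Both are classical; the cone device is slicker chiefly because it simultaneously sets up the second half. For linear independence, the paper stays with the cone: a dependence relation produces a constant $X\in\R^5$ orthogonal to $\widehat{\nu}$, hence tangent to $\widehat{\Sigma}$, forcing $\widehat{\Sigma}$ to be a hyperplane and $\hsiang_m$ to be totally geodesic; since the only $\torgrp$-invariant totally geodesic hypersphere is $\{x^5=0\}$ (each of the two points at distance $\pi/2$ from it must be $\torgrp$-fixed), which meets $\football$ in a single torus, this would force $m=1$. You instead decompose $\R^5$ into the three pairwise inequivalent irreducible $\torgrp$-representations, note that the annihilator $N$ of $\nu_m$ is $\torgrp$-invariant (a consequence of the $\torgrp$-equivariance of $\nu_m$, which you rightly invoke), and exclude $V_i\subseteq N$ case-by-case via the profile curve in the orbit-space lune. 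Your case analysis is correct (in particular, the dichotomy after $s\dot h-h\dot s\equiv0$ is legitimate since $s>0$ on the lune interior, so $h/s$ is globally constant), but it is substantially heavier than the paper's two-line tangency argument and buys no extra generality here, since the final classification of $\torgrp$-invariant totally geodesic hyperspheres is equally immediate either way. The paper's route also has the virtue of establishing the sharper general statement that linear dependence of the $\nu^i$ is equivalent to $\Sigma$ being totally geodesic, with the equivariant reduction confined to the very last step.
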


\begin{proof}
It is well-known (see for example Corollary 2.2 in \cite{Sav10})
that for any two-sided minimal hypersurface
$\Sigma \subset \Sp^{n+1} \subset \R^{n+2}$,
the normal component (to $\Sigma$ in $\Sp^{n+1}$)
of any translational Killing field on $\R^{n+2}$
is always (when nontrivial) an eigenfunction of eigenvalue $-n$
for the Jacobi operator $\jop$ of $\Sigma$,
and the component functions of the unit normal
are linearly independent
whenever $\Sigma$ is not totally geodesic.
However, to be self-contained we note how both claims can be easily seen by considering the cone $\widehat{\Sigma}$, centered at the origin,
over $\Sigma$ in $\R^{n+2}$, as follows.

Write $\nu=(\nu^i)_{i=1}^{n+2}$
for a unit normal to $\Sigma$
and $\widehat{\jop}$ for the Jacobi operator of $\widehat{\Sigma}$.
Then $\widehat{\nu}(x)=\nu(x/\abs{x})$
is a unit normal for $\widehat{\Sigma}$,
and each component $\widehat{\nu}^i$
is a Jacobi field for $\widehat{\Sigma}$
which is constant in the radial direction.
Therefore
\begin{equation*}
  0
  =
  \left.\left(\widehat{\jop}\widehat{\nu}^i\right)\right|_{\Sigma}
  =
  \jop\nu^i - n\nu^i,
\end{equation*}
where the second term arises from the Ricci curvature of $\Sp^{n+1}$.

Next suppose that the set $\{\nu^i\}_{i=1}^{n+2}$
is not linearly independent.
Then there exists a constant vector field $X$ on $\R^{n+2}$
which is everywhere orthogonal to $\nu$
and therefore also everywhere orthogonal to $\widehat{\nu}$.
This means that everywhere on $\widehat{\Sigma}$
the translational Killing field $X$
is tangential to $\widehat{\Sigma}$.
Thus the flow of $X$ preserves $\widehat{\Sigma}$,
and so $\widehat{\Sigma}$ is a hyperplane,
meaning that $\Sigma$ itself is a totally geodesic sphere.

Finally,
if $\Sigma$ is a $\torgrp$-invariant
totally geodesic sphere in $\Sp^4$,
then each of the two points at distance $\pi/2$ from $\Sigma$
must be fixed by $\torgrp$,
but this means that $\Sigma$ can be only $\{x^5=0\}$,
whose intersection with $\football$ consists of a single torus,
and which cannot therefore be any $\hsiang_m$ with $m \geq 2$.
\end{proof}

For each $m$
we now pick one of the two circular $\torgrp$-orbits on $\hsiang_m$ (to be consistent say the one lying on $\left\{x^3=x^4=0\right\}$),
we define $\dist_m \colon \hsiang_m \to \R$
to be the intrinsically defined distance function
from this circle,
and we write $\maxdst_m$ for the value of $\dist_m$
on the other circular $\torgrp$-orbit,
that is the distance in $\hsiang_m$
between the two circular $\torgrp$-orbits it contains.
Then for each $s \in \interval{0,\maxdst_m}$
the set $\{\dist_m = s\}$ is a torus,
while $\{\dist_m \leq s\}$ and $\{\dist_m \geq s\}$
are both solid tori.

We define also the function
$\vartheta \colon \Sp^4 \setminus \{\abs{x^5}=1\} \to \R$
by 
\begin{equation*}
   \vartheta(x^1,x^2,x^3,x^4,x^5)
  \vcentcolon=
  \arcsin\sqrt{\frac{\sum_{i=3}^4 (x^i)^2}{\sum_{i=1}^4 (x^i)^2}}-\frac{\pi}{4},
\end{equation*}
taking values in
$\IntervaL{-\frac{\pi}{4},\frac{\pi}{4}}$;
thus $\vartheta$ is a normalized signed distance
within the sphere of constant $x^5$
to~$\football$.
Then $\vartheta$ is smooth with nowhere vanishing gradient away from
$\{x^3=x^4=0\} \cup \{x^1=x^2=0\}$.
Furthermore,
$\vartheta$ is constant (or undefined)
on each orbit of $\torgrp$, as well as
on each orbit of the projection
$\partial_5^\top$ of the Killing field $\partial_5$
onto $\Sp^4$.

\begin{lem}\label{lem:CritChar}
For each integer $m \geq 1$
the critical points of $\vartheta|_{\hsiang_m}$
are the nodal points of $\nu_m^5$.
\end{lem}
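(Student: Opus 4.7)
My approach is to identify, at every point $p$ in the smoothness locus $U$ of $\vartheta|_{\hsiang_m}$, a canonical two-dimensional subspace $W_p\subset T_p\Sp^4$ containing all of $\nabla\vartheta$, $\nu_m$, and $\partial_5^\top$, in which $\nabla\vartheta$ and $\partial_5^\top$ form an orthogonal basis. The equivalence of the ``critical'' and ``nodal'' conditions then becomes a one-line linear-algebra computation.

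First, I would note that $U$ is the open, dense subset of $\hsiang_m$ on which $r_1r_2\neq 0$ and that every $\torgrp$-orbit through $U$ is a two-dimensional torus tangent to $\hsiang_m$. The $\torgrp$-invariance of $\vartheta$ forces $\nabla\vartheta(p)\perp T_p(\torgrp\cdot p)$; the $\torgrp$-invariance of $\hsiang_m$ forces $\nu_m(p)\perp T_p(\torgrp\cdot p)$; and the fact that $\torgrp$ acts trivially on the $x^5$-coordinate makes the two Killing generators $V_1,V_2$ of the torus action pointwise $\R^5$-orthogonal to $\partial_5$, hence (using $V_i\perp p$ in $\R^5$) also to $\partial_5^\top=\partial_5-x^5\,p$. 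Thus all three vectors lie in
\begin{equation*}
W_p\vcentcolon=\bigl(T_p(\torgrp\cdot p)\bigr)^\perp\cap T_p\Sp^4,
\end{equation*}
which is two-dimensional. Moreover $\sk{\nabla\vartheta,\partial_5^\top}=0$ by the recalled invariance of $\vartheta$ along the flow of $\partial_5^\top$, and both vectors are nonzero on $U$ (the former by the stated nowhere-vanishing property of $\nabla\vartheta$ on its smoothness locus; the latter because $\partial_5^\top(p)=0$ would force $|x^5(p)|=1$, off $\hsiang_m$). So $\{\nabla\vartheta,\partial_5^\top\}$ is an orthogonal basis of $W_p$.

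Writing $\nu_m=\alpha\nabla\vartheta+\beta\,\partial_5^\top$ in this basis, the point $p$ is a critical point of $\vartheta|_{\hsiang_m}$ iff $\nabla\vartheta$ is a multiple of $\nu_m$ iff $\beta=0$; while, since $\nu_m\perp p$ in $\R^5$,
\begin{equation*}
\nu_m^5(p)=\sk{\nu_m,\partial_5}_{\R^5}=\sk{\nu_m,\partial_5^\top}_{\R^5}=\beta\abs{\partial_5^\top}^2,
\end{equation*}
so $p$ is nodal for $\nu_m^5$ iff $\beta=0$ also. The two sets therefore coincide on $U$. To conclude, I would verify that nothing is missed on the complement of $U$ in $\hsiang_m$, which consists of the two circular $\torgrp$-orbits: a short computation using $\torgrp_p$-invariance of $T_p\hsiang_m$ at $p=(0,0,r\cos\theta,r\sin\theta,x^5)$ (with $r>0$) forces $\nu_m(p)$ to be parallel to $(0,0,x^5\cos\theta,x^5\sin\theta,-r)$, giving $|\nu_m^5(p)|=r>0$, so that no nodal point of $\nu_m^5$ lies outside $U$. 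The only real subtlety is identifying $W_p$ and the orthogonal role played there by $\partial_5^\top$; once this is in place the proof is essentially formal.
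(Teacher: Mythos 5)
Your argument is correct and is essentially the same as the paper's: both hinge on the observation that $\nabla\vartheta$, $\nu_m$, and $\partial_5^\top$ all lie in the two-dimensional orthogonal complement $W_p$ of the toroidal orbit in $T_p\Sp^4$, with $\nabla\vartheta\perp\partial_5^\top$, and that on the two circular orbits $\nu_m$ is forced to be proportional to $\partial_5^\top$, so those circles carry no zeros of $\nu_m^5$. The paper states this more tersely (it does not write out the $\alpha,\beta$ decomposition or the explicit computation at $p=(0,0,r\cos\theta,r\sin\theta,x^5)$), but the underlying idea and its implementation coincide with yours.
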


\begin{proof}
The $\torgrp$-invariance of $\hsiang_m$
forces $\nu_m$ to be proportional to $\partial_5^\top$
on the two circular orbits of $\torgrp$ in $\hsiang_m$.
Thus these circles are disjoint from the nodal set of $\nu^5$.
Elsewhere $\vartheta|_{\hsiang_m}$ is smooth,
and a point $p \in \hsiang_m$
is a critical point of $\vartheta|_{\hsiang_m}$
if and only if $\nu_m|_p$ is proportional to $\nabla \vartheta|_p$,
but $\nabla \vartheta|_p$
is orthogonal to $\partial_5^\top|_p$
as well as to the toroidal $\torgrp$-orbit through $p$,
establishing the claim.
\end{proof}

\begin{lem}\label{lem:intermediate}
Let $m \geq 2$ be an integer,
and suppose $0<r<t<\maxdst_m$
satisfy $\{\dist_m=r\},\{\dist_m=t\} \subset \football$
and $\{r<\dist_m<t\} \cap \football = \emptyset$.
Then there exists $s \in \interval{r,t}$ such that ${\nu_m^5|}_{\{\dist_m=s\}}=0$.
\end{lem}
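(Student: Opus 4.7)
The plan is to reduce to a one-dimensional extremum argument and then invoke Lemma~\ref{lem:CritChar}. Both $\vartheta$ (as recorded in the paragraph preceding Lemma~\ref{lem:CritChar}) and $\dist_m$ (being the intrinsic distance from a $\torgrp$-invariant circle, taken with respect to the $\torgrp$-invariant induced metric on $\hsiang_m$) are $\torgrp$-invariant, and for every $s \in \interval{0,\maxdst_m}$ the level set $\{\dist_m=s\}$ is a single $\torgrp$-orbit, namely a torus. Consequently, on the open subset $\{0<\dist_m<\maxdst_m\}$ of $\hsiang_m$ the function $\vartheta|_{\hsiang_m}$ factors through $\dist_m$: there exists a continuous function $\vartheta_m \colon \interval{0,\maxdst_m} \to \R$ with $\vartheta|_{\hsiang_m} = \vartheta_m \circ \dist_m$ on that set.

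Since $\football = \{\vartheta=0\}$, the hypotheses translate to $\vartheta_m(r)=\vartheta_m(t)=0$ together with $\vartheta_m \neq 0$ on $\interval{r,t}$; thus $\vartheta_m$ has constant sign on $\interval{r,t}$ and, by the extreme value theorem applied to the compact sub-interval with endpoints $r$ and $t$, attains an interior extremum at some $s \in \interval{r,t}$. At this $s$, every point $p$ of the torus $\{\dist_m=s\}$ is a critical point of $\vartheta|_{\hsiang_m}$: the function $\vartheta|_{\hsiang_m}$ is smooth near $p$ (interior tori avoid the singular loci $\{x^1=x^2=0\} \cup \{x^3=x^4=0\}$ of $\vartheta$) and, by $\torgrp$-invariance, takes the constant value $\vartheta_m(s')$ on each nearby orbit $\{\dist_m=s'\}$, so the local extremality of $\vartheta_m$ at $s$ transfers immediately to local extremality of $\vartheta|_{\hsiang_m}$ at~$p$. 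Lemma~\ref{lem:CritChar} then yields $\nu_m^5(p)=0$ at every such $p$, which is precisely the conclusion of the lemma.

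The only bookkeeping that requires some care is confirming that $\vartheta|_{\hsiang_m}$ and $\dist_m$ descend to a continuous one-variable function on the one-dimensional quotient $\hsiang_m/\torgrp$, so that the extremum argument is legitimate; this is essentially already contained in the material preceding the lemma (toroidal orbits are single level sets of $\dist_m$, and $\vartheta$ is smooth away from the two circular-orbit loci). Beyond this I do not foresee a genuine obstacle: the argument is a straightforward combination of the extreme value theorem with Lemma~\ref{lem:CritChar}, exploiting the fact that the relevant data are all $\torgrp$-invariant.
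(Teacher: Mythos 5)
Your argument is correct and essentially the same as the paper's: both reduce to a one-variable extremum/Rolle argument on the $\torgrp$-invariant quotient (you descend to a function $\vartheta_m$ of $\dist_m$, the paper parametrizes a path $\gamma$ crossing orbits orthogonally), then use $\torgrp$-invariance to promote the critical point to the whole toroidal orbit, and conclude via Lemma~\ref{lem:CritChar}. The descended one-variable function you introduce is in fact the $\widetilde{\vartheta}_m$ that the paper itself defines immediately afterward, so your bookkeeping matches the paper's intent.
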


\begin{proof}
The function $\vartheta|_{\hsiang_m}$ attains the same value, namely zero, at each point of the tori $\{\dist_m=r\}$ and $\{\dist_m=t\}$. 
Thus, let $\gamma\colon I\to H_m$ be a smooth path, connecting a point on the former torus to a point on the latter, crossing each $G$-orbit orthogonally. 
Then the composite map $\vartheta|_{\hsiang_m}\circ\gamma\colon I\to \R$ must have a critical point at some some $s_0\in I$; then $p=\gamma(s_0)\in H_m$ is clearly a critical point for the function $\vartheta|_{\hsiang_m}\colon H_m\to\R$ and so by  $G$-invariance it follows that the whole $G$-orbit through $p$, denoted $\{\dist_m=s\}$, consists of critical points for such a function. 
The conclusion follows by appealing to Lemma \ref{lem:CritChar}.
\end{proof}

\begin{pro}
\label{prop:Hsiang-at-or-below-minus-3}
Let $m \geq 1$ be an integer.
Then $\jop_m$ acting on the space of $\torgrp$-invariant functions
has at least $m-1$ eigenvalues (counted with multiplicity)
strictly less than $-3$,
and $-3$ is itself an eigenvalue with multiplicity at least $1$;
if $m \geq 2$ and the restriction to $\torgrp$-invariant functions
is lifted, then the multiplicity of $-3$ is at least $5$.
\end{pro}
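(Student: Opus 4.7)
The plan is to dispatch the three parts of the statement in turn: (C) the multiplicity-five claim for $-3$ when $m \geq 2$ without the equivariance restriction, (B) the multiplicity-one claim for $-3$ in the $\torgrp$-invariant spectrum, and (A) the bound of at least $m-1$ $\torgrp$-invariant eigenvalues strictly below $-3$. Part (C) is immediate from Lemma~\ref{lem:Savo}. For (B), the function $\nu_m^5$ is $\torgrp$-invariant (as observed in the paragraph preceding Lemma~\ref{lem:Savo}) and nonzero: for $m \geq 2$ by Lemma~\ref{lem:Savo}, and for $m = 1$ because $\hsiang_1 = \{x^5 = 0\}$ is totally geodesic, so $\nu_1^5 \equiv \pm 1$ satisfies $\jop_1 \nu_1^5 = -3 \nu_1^5$ (since $\Ric(\nu,\nu) = 3$ and $|A|^2 = 0$).

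The substantive part is (A), which is vacuous for $m = 1$, so I would assume $m \geq 2$. My plan is to apply Proposition~\ref{pro:mr}\ref{mrLower} after a suitable partition of $\hsiang_m$. Let $0 < r_1 < \cdots < r_m < \maxdst_m$ be the distances at which the $m$ tori of $\hsiang_m \cap \football$ are located. Applying Lemma~\ref{lem:intermediate} to each consecutive pair $(r_j, r_{j+1})$ produces values $s_j \in \interval{r_j, r_{j+1}}$ for $j = 1, \ldots, m-1$ such that $\nu_m^5$ vanishes on the torus $\{\dist_m = s_j\}$. These $m-1$ smooth $\torgrp$-invariant tori cut $\hsiang_m$ into $m$ open connected $\torgrp$-invariant pieces $\Omega_1, \ldots, \Omega_m$, with smooth boundaries (since each $s_j$ is a regular value of $\dist_m$).

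With $\Omega = \hsiang_m$ (so $\partial\Omega = \emptyset$), $\sigma \equiv 1$, $T$ the sesquilinear form associated with $\jop_m$, and the partition $\{\Omega_i\}$, the hypotheses of Proposition~\ref{pro:mr} are met. On each $\Omega_i$ the restriction $\nu_m^5|_{\Omega_i}$ sits in $H^1_{\dint{\dbdy}\Omega_i}(\Omega_i)$ (its trace vanishes on $\intbdy\Omega_i \subset \{\nu_m^5 = 0\}$), is $\torgrp$-invariant, and is a weak Dirichlet eigenfunction of $\dint{T}_{\Omega_i}$ with eigenvalue $-3$; by Lemma~\ref{lem:Savo} combined with the unique continuation principle, it is not identically zero. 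Hence $\dim E_{\torgrp,1}^{\leq -3}(\dint{T}_{\Omega_i}) \geq 1$ for every $i$, and Proposition~\ref{pro:mr}\ref{mrLower} with $t = -3$ gives
\[
\dim E_{\torgrp,1}^{<-3}(T) \;\geq\; \dim E_{\torgrp,1}^{<-3}(\dint{T}_{\Omega_1}) + \sum_{i=2}^{m} \dim E_{\torgrp,1}^{\leq -3}(\dint{T}_{\Omega_i}) \;\geq\; m-1.
\]

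The main obstacle is arranging the partition so that the asymmetric inequality of Proposition~\ref{pro:mr}\ref{mrLower} is not a hindrance: I only need a $\leq -3$ contribution from $m-1$ of the $m$ pieces, which is precisely what the nodal tori afford via $\nu_m^5|_{\Omega_i}$. The crucial technical input (Lemma~\ref{lem:intermediate}) supplying a nodal torus between consecutive football intersections is already in place, and unique continuation takes care of the nontriviality of $\nu_m^5|_{\Omega_i}$.
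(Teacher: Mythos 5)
Your proposal is correct and follows essentially the same route as the paper's own proof: use Lemma~\ref{lem:Savo} for the $-3$ eigenspace, invoke Lemma~\ref{lem:intermediate} to produce $m-1$ nodal tori of $\nu_m^5$ (hence a partition of $\hsiang_m$ into $m$ pieces on each of which $\nu_m^5$ is a nonzero Dirichlet eigenfunction with eigenvalue $-3$), and then apply Proposition~\ref{pro:mr}\ref{mrLower}. The paper's proof is quite terse; your version merely makes explicit the partition, the unique-continuation step guaranteeing nontriviality on each piece, and the bookkeeping in the asymmetric inequality, all of which are indeed the intended content.
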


\begin{proof}
By Lemma \ref{lem:Savo} the components of $\nu_m$
span a five-dimensional subspace of the eigenspace of eigenvalue $-3$ for $m\geq 2$ (else for $m=1$ one has a totally geodesic sphere). 
Furthermore, by Lemma~\ref{lem:intermediate} and the way we labelled $H_m$ the $\torgrp$-invariant function $\nu_m^5$
has at least $m$ nodal domains
(with smooth toroidal boundaries), at which stage the proof is completed thanks to Proposition~\ref{pro:mr}.
\end{proof}

The next lemmata aim at refining our analysis, towards an upper bound for the number of eigenvalues not exceeding the value $-3$.

\begin{lem}
For each integer $m \geq 1$ the function $\vartheta|_{\hsiang_m}$ does not have any critical points on $\football$.
\end{lem}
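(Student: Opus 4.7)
My plan is to argue by contradiction, converting the criticality hypothesis at $p$ into a tangency between $\hsiang_m$ and $\football$ and then using their common $\torgrp$-invariance to derive a contradiction. First I would show that if $p\in\hsiang_m\cap\football$ is a critical point of $\vartheta|_{\hsiang_m}$, then $p$ lies on a principal (toroidal) $\torgrp$-orbit. Indeed, $p$ cannot coincide with either of the singular points $(0,0,0,0,\pm 1)$ of $\football$, since these are fixed by $\torgrp$ and so would require $\hsiang_m$ to admit a three-dimensional $\torgrp$-invariant subspace of $T_{(0,0,0,0,\pm 1)}\Sp^4\cong\R^2\oplus\R^2$ as its tangent space, while no such subspace exists. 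In particular $p$ lies off the singular axes $\{x^1=x^2=0\}\cup\{x^3=x^4=0\}$, so $\vartheta$ is smooth with nowhere vanishing intrinsic gradient at $p$, and $\nabla\vartheta|_p$ is a nonzero vector in $T_p\Sp^4$ orthogonal to $T_p\football$. The criticality condition then forces $\nabla\vartheta|_p$ to be proportional to $\nu_m|_p$, whence $T_p\hsiang_m=T_p\football$; that is, $\hsiang_m$ and $\football$ are tangent at $p$.

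Next, I would propagate this tangency into a local coincidence via the $\torgrp$-invariance. Within an open $\torgrp$-saturated tube around the principal orbit $\torgrp\cdot p$ the local orbit space is a smooth two-dimensional manifold, and both $\hsiang_m$ and $\football$ descend to smooth one-dimensional profile curves in it. By the standard cohomogeneity-one reduction, these two profile curves satisfy the same second-order ODE (namely, that governing $\torgrp$-invariant minimal hypersurfaces in $\Sp^4$), and the tangency of $\hsiang_m$ and $\football$ at $p$ translates into equality of position and tangent direction for the two curves at the image of $p$. Picard--Lindel\"of uniqueness then forces the curves, and hence $\hsiang_m$ and $\football$ themselves, to coincide as sets in an open $\torgrp$-saturated neighborhood of $\torgrp\cdot p$ in $\Sp^4$.

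This would be incompatible with the fact that $\hsiang_m\cap\football$ is a disjoint union of $m$ smooth two-dimensional tori (which has empty interior in $\Sp^4$), yielding the desired contradiction. The main step requiring care, I anticipate, is the cohomogeneity-one reduction to a profile ODE on the principal stratum; however, this is classical in the present regular setting (cf.\ \cite{Hsiang1983}) and presents no genuine difficulty.
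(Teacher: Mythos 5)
Your proposal is correct but reaches the conclusion by a genuinely different mechanism than the paper's. Both proofs agree on the first step: criticality of $\vartheta|_{\hsiang_m}$ at a point $p\in\football$ forces $\nu_m|_p\parallel\nabla\vartheta|_p$, hence tangency $T_p\hsiang_m=T_p\football$, and by $\torgrp$-invariance this tangency holds along the whole toroidal orbit through $p$. The paper then finishes in one stroke by invoking the unique continuation principle for linear second-order elliptic equations: writing one hypersurface as a graph over the other near the orbit, the difference function has vanishing Cauchy data along a hypersurface of the domain, hence vanishes identically, so $\hsiang_m$ and $\football$ coincide -- a contradiction. You instead pass to the two-dimensional orbit space of $\torgrp$ and argue that the profile curves of the two invariant minimal hypersurfaces satisfy the same second-order ODE; tangency of the hypersurfaces gives the curves matching initial data, so Picard--Lindel\"of forces local coincidence. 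This is a valid alternative: it is more elementary (ODE uniqueness in place of elliptic unique continuation from Cauchy data) and is very much in the spirit of Hsiang's own construction, but it requires you to set up the cohomogeneity reduction and observe that the reduced ODE is regular near a principal orbit, whereas the paper's route is shorter and needs no reference to the orbit structure. A minor remark: your opening step excluding the poles $(0,0,0,0,\pm1)$ via the nonexistence of a three-dimensional $\torgrp$-invariant subspace of the tangent space is correct but unnecessary -- $\vartheta$ is not even defined at $\{\lvert x^5\rvert=1\}$, and on $\football$ away from the poles one never meets the axes $\{x^1=x^2=0\}\cup\{x^3=x^4=0\}$ (on $\football$ the condition $(x^1)^2+(x^2)^2=(x^3)^2+(x^4)^2$ makes these coincide with the poles), so the regularity of $\vartheta$ at $p$ is automatic.
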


\begin{proof}
Otherwise $\hsiang_m$ would be tangential to $\football$ along a torus in $\football$, and so
(using the unique continuation principle for linear elliptic equations)
would coincide with $\football$.
\end{proof}

Since the function $\vartheta$ is $\torgrp$-invariant,
for each $m$ there is a unique function
$\widetilde{\vartheta}_m\colon \IntervaL{0,\maxdst_m} \to \R$
such that
$\vartheta(p)=\widetilde{\vartheta}(\dist_m(p))$
for all $p \in \hsiang_m$.
The set of critical points of $\widetilde{\vartheta}_m$
is the image under $\dist_m$
of the set of critical points of $\vartheta|_{\hsiang_m}$.

\begin{lem}
For any integer $m \geq 1$
and any critical point $s$ of $\widetilde{\vartheta}_m$
the values of $\widetilde{\vartheta}_m$
and its second derivative at $s$
have opposite signs.
\end{lem}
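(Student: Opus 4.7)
The plan is to compute $\widetilde{\vartheta}_m''(s)$ in closed form by combining the minimality of $\hsiang_m$ with an explicit evaluation of the ambient Laplacian and Hessian of $\vartheta$. I would begin by introducing coordinates $(\xi,\phi,\theta_1,\theta_2)$ on $\Sp^4$ adapted to the $\torgrp$-action, defined by
\[
  (x^1+ix^2,\, x^3+ix^4,\, x^5)
  = \bigl(\cos\xi\cos\phi\, e^{i\theta_1},\; \cos\xi\sin\phi\, e^{i\theta_2},\; \sin\xi\bigr),
\]
so that $\vartheta=\phi-\pi/4$ and the round metric takes the diagonal form $d\xi^2+\cos^2\xi\bigl(d\phi^2+\cos^2\phi\, d\theta_1^2+\sin^2\phi\, d\theta_2^2\bigr)$. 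A routine Christoffel calculation then gives
\[
  \Delta_{\Sp^4}\vartheta = \frac{2\cot(2\phi)}{\cos^2\xi},
  \qquad
  \nabla^{\Sp^4}_{\partial_\phi}\partial_\phi = \tfrac12\sin(2\xi)\,\partial_\xi,
\]
and, since the latter has no $\partial_\phi$-component while $\partial_\phi\partial_\phi(\vartheta)=0$, the ambient Hessian satisfies $\operatorname{Hess}_{\Sp^4}(\vartheta)(\partial_\phi,\partial_\phi)\equiv 0$.

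Next, let $p\in\hsiang_m$ be a critical point of $\vartheta|_{\hsiang_m}$ projecting to $s$, with coordinates $(\xi_0,\phi_c,\cdot,\cdot)$. The vanishing of $\nabla^{\hsiang_m}\vartheta$ at $p$ forces $\nu_m|_p$ to be a unit multiple of the ambient gradient $\nabla^{\Sp^4}\vartheta|_p=\cos^{-2}(\xi_0)\,\partial_\phi$, whence $\operatorname{Hess}_{\Sp^4}(\vartheta)(\nu_m,\nu_m)|_p=0$ by the previous paragraph. Combining this with the standard tangential decomposition
\[
  \Delta_{\hsiang_m}\bigl(\vartheta|_{\hsiang_m}\bigr)
  = \Delta_{\Sp^4}\vartheta - \operatorname{Hess}_{\Sp^4}(\vartheta)(\nu_m,\nu_m) + H_{\hsiang_m}\,\nu_m(\vartheta)
\]
and the minimality of $\hsiang_m$ (so $H_{\hsiang_m}=0$), one obtains $\bigl(\Delta_{\hsiang_m}(\vartheta|_{\hsiang_m})\bigr)(p) = 2\cot(2\phi_c)/\cos^2\xi_0$. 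Since $\vartheta|_{\hsiang_m}$ is $\torgrp$-invariant and $s$ is critical, the first-derivative term in the $\torgrp$-reduced Laplacian on $\hsiang_m$ drops out at $s$, and therefore
\[
  \widetilde\vartheta_m''(s) = \frac{2\cot(2\phi_c)}{\cos^2\xi_0}.
\]

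For the sign analysis, recall that by the immediately preceding lemma $\phi_c\neq\pi/4$. Moreover the critical point $s$ must lie in the interior of $\interval{0,\maxdst_m}$: at the endpoints, corresponding to the singular circular $\torgrp$-orbits (on which $\phi\in\{0,\pi/2\}$), the profile curve of $\hsiang_m$ exits the boundary of the orbit space transversally by the local smoothness of $\hsiang_m$, so $\phi'\neq 0$ there and hence $\widetilde\vartheta_m'\neq 0$. Thus $\phi_c\in(0,\pi/2)\setminus\{\pi/4\}$, so $\sin(2\phi_c)>0$ and
\[
  \operatorname{sgn}\cot(2\phi_c)
  =\operatorname{sgn}\cos(2\phi_c)
  =-\operatorname{sgn}(\phi_c-\pi/4)
  =-\operatorname{sgn}\widetilde\vartheta_m(s),
\]
which is the desired conclusion. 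The main technical obstacle is the ambient Hessian evaluation, but once the metric is set up in the above coordinates it reduces to the one-line observation that $\Gamma^\phi_{\phi\phi}=0$; a secondary subtlety is verifying that boundary endpoints are not critical, which is handled by appealing to the local smoothness of $\hsiang_m$ at its singular $\torgrp$-orbits.
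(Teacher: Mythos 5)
Your proof is correct, and it takes a genuinely different route from the paper's. The paper argues geometrically: at a critical point $p$ the normal $\nu_m|_p$ is tangent to the constant-$x^5$ three-sphere $S$ through $p$, and from the observation that the areas of $\torgrp$-orbit tori inside $S$ increase toward $\football$ it deduces that the mean curvature of $S\cap\hsiang_m$ in $S$ points away from $\football\cap S$, hence is a positive multiple of $\sgn\bigl(\vartheta(p)\bigr)\nabla\vartheta|_p$; the minimality of $\hsiang_m$ in $\Sp^4$ then forces the remaining second fundamental form contribution (in the direction transverse to $S\cap\hsiang_m$ within $\hsiang_m$) to carry the opposite sign, which yields the claim. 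You instead introduce explicit adapted coordinates, compute the ambient Laplacian and the one relevant Hessian entry of $\vartheta$ directly, and apply the tangential--normal splitting of the Laplacian on a minimal hypersurface to land on the closed formula $\widetilde{\vartheta}_m''(s)=2\cot(2\phi_c)/\cos^2\xi_0$, from which the sign statement is immediate. Your route is more computational, but it is fully explicit and even delivers a formula for $\widetilde{\vartheta}_m''$; the paper's argument is coordinate-free and conceptually cleaner, at the cost of leaving implicit exactly the two ingredients you spell out, namely the vanishing of the ambient Hessian of $\vartheta$ in the transverse direction and the identification of $\widetilde{\vartheta}_m''(s)$ with $\Delta_{\hsiang_m}\vartheta|_p$ at a critical orbit (both needed to pass from the mean-curvature sign to the sign of the second derivative). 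Your remark that the endpoints of $[0,\maxdst_m]$ are not critical points of $\widetilde{\vartheta}_m$ is correct and is worth stating; the paper leaves it unaddressed, though it is relied upon implicitly in the Corollary that follows.
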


\begin{proof}
Suppose $s$ is a critical point of $\widetilde{\vartheta}_m$,
and so let $p$ be a critical point of $\vartheta|_{\hsiang_m}$
for which $\dist_m(p)=s$.
Then $\nu_m|_p$ is tangential
to the constant-mean-curvature $3$-sphere $S$
of constant $x^5$ through $p$.
The sphere $S$ is preserved by $\torgrp$,
whose orbits thereon are mostly tori,
each of constant distance from $S \cap \football$ (itself an orbit),
degenerating to two circular orbits
at maximal distance from $S \cap \football$.
The areas of such tori are directly checked to increase toward $\football$,
and consequently the mean curvature in $S$
of $S \cap \hsiang_m$ points away from $\football \cap S$,
so at $p$ is proportional to $\nabla \vartheta|_p$,
with the sign of the constant of proportionality
agreeing with that of $\vartheta(p)$.
The claim now follows from the minimality of $\hsiang_m$ in~$\Sp^4$.
\end{proof}

\begin{cor}
Let $m \geq 1$ be an integer.
The function $\widetilde{\vartheta}_m$
has exactly one critical point
between any two consecutive zeros
and has no critical point
less than its smallest zero
or greater than its greatest zero. Hence,
for each integer $m \geq 2$
there exists a strictly increasing finite sequence
$\{s_{m,i}\}_{i=1}^{m-1} \subset \interval{0,\maxdst_m}$
of $m-1$ reals
such that
\begin{equation*}
  \{\nu_m^5=0\}
  =
  \bigcup_{i=1}^{m-1} \{\dist_m=s_{m,i}\},
\end{equation*}
a union of $m-1$ disjoint,
$\torgrp$-invariant tori,
and there exists  a strictly increasing finite sequence
$\{t_{m,i}\}_{i=1}^{m-2} \subset \interval{0,\maxdst_m}$
of $m-2$ reals
such that for each $i$
\begin{equation*}
  s_{m,i} < t_{m,i} < s_{m,i+1}
 \quad \mbox{and} \quad
 \{\dist_m=t_{m,i}\} \subseteq \{d\nu_m^5=0\}.
\end{equation*}
\end{cor}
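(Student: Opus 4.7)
The plan is to combine the preceding sign-at-critical-points lemma with Rolle's theorem and a careful analysis of the boundary behavior of $\widetilde{\vartheta}_m$. First I would observe that since $\hsiang_m \cap \football$ consists of $m$ $\torgrp$-invariant tori, $\widetilde{\vartheta}_m$ has exactly $m$ zeros $r_1<\cdots<r_m$ in $\interval{0,\maxdst_m}$. At the endpoints the circular $\torgrp$-orbits sit inside $\{x^3=x^4=0\}$ or $\{x^1=x^2=0\}$, so $\widetilde{\vartheta}_m$ attains the extreme values $\pm\pi/4$ there; conversely the one-dimensional loci $\{\vartheta=\pm\pi/4\}\cap\Sp^4$ cannot accommodate a two-dimensional torus, so $\widetilde{\vartheta}_m$ stays strictly inside $\interval{-\pi/4,\pi/4}$ on $\interval{0,\maxdst_m}$.

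For uniqueness of a critical point between consecutive zeros $r_j,r_{j+1}$, existence would follow from Rolle's theorem. On this subinterval $\widetilde{\vartheta}_m$ has constant sign, so the preceding sign lemma forces every critical point to be a strict local extremum of a fixed type (a maximum if positive, a minimum if negative). Two consecutive critical points of the same type would, by continuity, force an intermediate critical point of the opposite type, which is excluded by the same sign lemma; hence exactly one critical point occurs in each such interval. For absence of critical points in $\interval{0,r_1}$ I would argue by contradiction: a critical point $s_*\in\interval{0,r_1}$ would be a strict local minimum with $\widetilde{\vartheta}_m(s_*)\in\interval{-\pi/4,0}$, and since $\widetilde{\vartheta}_m(0)=-\pi/4<\widetilde{\vartheta}_m(s_*)$ while the function exceeds $\widetilde{\vartheta}_m(s_*)$ just to the left of $s_*$, it would attain an interior maximum at some $s_{**}\in\interval{0,s_*}$; this $s_{**}$ would be a critical point with negative value, at which the sign lemma forces $\widetilde{\vartheta}_m''(s_{**})>0$, incompatible with $s_{**}$ being a local maximum. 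The argument on $\interval{r_m,\maxdst_m}$ is symmetric.

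Combining the two parts would produce exactly $m-1$ critical points $s_{m,1}<\cdots<s_{m,m-1}$ of $\widetilde{\vartheta}_m$ in $\interval{0,\maxdst_m}$, each at a toroidal level set. By Lemma~\ref{lem:CritChar} the preimages $\{\dist_m=s_{m,i}\}$ would exhaust the nodal set of $\nu_m^5$, modulo checking that $\nu_m^5$ does not vanish on either circular orbit; the latter follows because $\nu_m\propto\partial_5^\top$ on those orbits (as in the proof of Lemma~\ref{lem:CritChar}) and the fifth component of $\partial_5^\top$ equals $1-(x^5)^2>0$ there. For the $t_{m,i}$, the $\torgrp$-invariance of $\nu_m^5$ lets it descend to a smooth function $f\colon\IntervaL{0,\maxdst_m}\to\R$ whose zeros are precisely $s_{m,1},\ldots,s_{m,m-1}$, and Rolle's theorem between $s_{m,i}$ and $s_{m,i+1}$ would supply $t_{m,i}\in\interval{s_{m,i},s_{m,i+1}}$ with $f'(t_{m,i})=0$. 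On the $\torgrp$-invariant torus $\{\dist_m=t_{m,i}\}$ the tangential derivatives of $\nu_m^5$ vanish by $\torgrp$-invariance and the transverse derivative vanishes by the choice of $t_{m,i}$, whence $d\nu_m^5=0$ on the whole torus.

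The principal difficulty, I expect, is the no-critical-points-outside-$\IntervaL{r_1,r_m}$ claim, since the sign lemma constrains critical points but does not directly preclude them; one must genuinely exploit the extremal boundary values $\pm\pi/4$ and chase a second critical point to obtain the contradiction.
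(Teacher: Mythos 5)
Your proof is correct and follows the route the paper intends (the paper states this corollary without proof, as a consequence of the preceding two lemmas), namely: the sign-versus-second-derivative lemma makes every critical value a strict local maximum where $\widetilde{\vartheta}_m>0$ and a strict local minimum where $\widetilde{\vartheta}_m<0$, Rolle's theorem supplies a critical point between consecutive zeros, and the extremal endpoint values $\pm\pi/4$ rule out critical points in the two outer intervals by the ``chase an interior extremum of the wrong type'' argument; the $t_{m,i}$ then come from a second application of Rolle to the function to which $\nu_m^5$ descends. Two small inaccuracies worth correcting: the sets $\{x^1=x^2=0\}\cap\Sp^4$ and $\{x^3=x^4=0\}\cap\Sp^4$ are two-spheres, not one-dimensional (the point is simply that $\hsiang_m$ meets them exactly in its two circular $\torgrp$-orbits, so the toroidal orbits have $\abs{\vartheta}<\pi/4$), and in the uniqueness argument ``two \emph{consecutive} critical points of the same type forcing an intermediate one'' should be rephrased as: two local minima with no critical point strictly between them contradict strict monotonicity on the intervening interval.
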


\begin{lem}
\label{lem:first_dirichlet}
Let $m \geq 2$ and $i \in \IntervaL{1,m-2}$ be integers.
\begin{enumerate}[label={\normalfont(\roman*)}]
\item{Let $\Omega$ be any of the following domains:
$\{\dist_m \leq s_{m,1}\}$ (a solid torus),
$\{\dist_m \geq s_{m,m-1}\}$ (another solid torus),
or
$\{s_{m,i} \leq \dist_m \leq s_{m,i+1}\}$
(a torus times a closed interval)
for some integer $i \in \IntervaL{1,m-2}$.
Then $\nu_m^5|_\Omega$
is a first eigenfunction of $\jop_m$ on $\Omega$
subject to the homogeneous Dirichlet boundary condition.}
\item{Let $\Omega$ be either
$\{s_{m,i} \leq \dist_m \leq t_{m,i}\}$
or
$\{t_{m,i} \leq \dist_m \leq s_{m,i+1}\}$
(each a torus times a closed interval).
Then $\nu_m^5|_{\Omega}$
is a first eigenfunction of $\jop_m$ on $\Omega$
subject to the homogeneous Neumann condition on
$\{\dist_m=t_{m,i}\}$
and to the homogeneous Dirichlet condition
on the remanining boundary torus.}
\end{enumerate}
\end{lem}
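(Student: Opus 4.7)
The plan is to leverage the fact that $\nu_m^5$ is itself a global eigenfunction of $\jop_m$ on $\hsiang_m$ with eigenvalue $-3$, by Lemma \ref{lem:Savo}; consequently its restriction to any $\torgrp$-invariant Lipschitz subdomain $\Omega$ is automatically an eigenfunction of the appropriate mixed boundary value problem on $\Omega$, provided the boundary data are met. Once this is verified, the conclusion that $-3$ is the \emph{first} eigenvalue will follow from the sign-definiteness of $\nu_m^5|_\Omega$ in the interior.

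For the boundary conditions I would proceed case by case, using the preceding corollary. In case (i), each connected component of $\partial\Omega$ is a torus of the form $\{\dist_m=s_{m,i}\}$, on which $\nu_m^5$ vanishes by the corollary; so $\nu_m^5|_\Omega \in H^1_{\partial\Omega}(\Omega)$ and the homogeneous Dirichlet condition is fulfilled. In case (ii) the Dirichlet portion of $\partial\Omega$ is again such a zero torus, while the Neumann portion $\{\dist_m=t_{m,i}\}$ is contained in $\{d\nu_m^5=0\}$; since the full differential of $\nu_m^5$ vanishes there, so in particular does the outward normal derivative, and the homogeneous Neumann condition holds at once.

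Next I would observe that $\nu_m^5|_\Omega$ is sign-definite on $\mathring\Omega$: the zero set of $\nu_m^5$ is the union of the tori $\{\dist_m=s_{m,i}\}$, and by construction each of these is either disjoint from $\Omega$ or entirely contained in $\partial\Omega$, so $\nu_m^5$ maintains a constant sign in the interior, in each of the cases listed in (i) and (ii).

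To promote $\nu_m^5|_\Omega$ to a first eigenfunction, I would invoke the standard spectral principle on a connected Lipschitz domain: any first eigenfunction $\varphi_1$ of the mixed boundary value problem can be chosen strictly positive on $\mathring\Omega$ (by the strong maximum principle, applicable at the Neumann portion of the boundary as well as at interior points); hence $L^2$-orthogonality between $\varphi_1$ and a sign-definite eigenfunction is possible only if the two eigenvalues coincide. The main obstacle is arguably this last step, where one needs the strong maximum principle in the presence of mixed Dirichlet--Neumann data on a Lipschitz domain --- classical material but worth spelling out carefully in the present geometric setting.
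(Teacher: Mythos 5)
Your proposal is correct and is essentially the paper's own argument, just spelled out: the paper's entire proof is the single remark that $\nu_m^5|_\Omega$ has constant sign on the interior, leaving the standard spectral consequences (boundary conditions met because of the corollary; sign-definite eigenfunction $\Rightarrow$ first eigenfunction) implicit. One small correction to your closing caveat: there is no genuine regularity obstacle here, because the boundary components of each $\Omega$ are smooth $\torgrp$-invariant tori, and the Dirichlet and Neumann portions occupy \emph{disjoint} boundary components (no Dirichlet--Neumann interface). Moreover, for the orthogonality argument one only needs strict positivity of a first eigenfunction in the \emph{interior}, which follows from the interior strong maximum principle applied to a nonnegative minimizer of the Rayleigh quotient; no boundary-point maximum principle for the mixed problem is actually required.
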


\begin{proof}
For both items, it suffices to notice that the restriction in question has constant sign on the interior of the domain in question.
\end{proof}

\begin{lem}
\label{lem:second_neumann}
Let $m \geq 3$ be an integer, and let $\Omega$ be any of the following domains:
$\{\dist_m \leq t_{m,1}\}$ (a solid torus),
$\{\dist_m \geq t_{m,m-2}\}$ (another solid torus),
or $\{t_{m,i} \leq \dist_m \leq t_{m,i+1}\}$
(a torus times a closed interval)
for some $i\in\{1,\ldots,m-3\}$.
Then $\nu_m^5|_{\Omega}$
is a second $\torgrp$-invariant eigenfunction of $\jop_m$ on $\Omega$
subject to the homogeneous Neumann boundary condition,
and moreover there are no other second eigenfunctions
in this class
(aside from rescalings of this one).
\end{lem}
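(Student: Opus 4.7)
The plan is to reduce the $\torgrp$-invariant sector of the Neumann eigenvalue problem for $\jop_m$ on $\Omega$ to a one-dimensional Sturm--Liouville problem on an interval, and then to appeal to the classical Sturm oscillation theorem.

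First I would verify that $\nu_m^5|_\Omega$ is a $\torgrp$-invariant Neumann eigenfunction of $\jop_m$ on $\Omega$ with eigenvalue $-3$: the $\torgrp$-invariance and the eigenvalue property come from Lemma \ref{lem:Savo}, while the Neumann condition on each boundary torus $\{\dist_m = t_{m,j}\}$ is automatic from the preceding corollary, which asserts $d\nu_m^5 = 0$ there (so, in particular, the outward normal derivative vanishes). Moreover $\nu_m^5|_\Omega$ must change sign in the interior, since by the corollary its nodal set meets the interior of $\Omega$ exactly in the single torus $\{\dist_m = s_{m,j}\}$ corresponding to the unique $j$ for which $s_{m,j}$ belongs to the open interval of $\dist_m$-values occupied by $\Omega$. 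Consequently $\nu_m^5|_\Omega$ cannot be a first eigenfunction, and the second $\torgrp$-invariant Neumann eigenvalue is at most $-3$.

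Next I would perform the dimensional reduction. Since $\torgrp$ acts on $\hsiang_m$ with cohomogeneity one and principal orbits $\{\dist_m = \text{const.}\}$, every $\torgrp$-invariant function $u$ on $\Omega$ has the form $u = \tilde{u} \circ \dist_m$ for some $\tilde{u}$ defined on the interval $I \subseteq \IntervaL{0, \maxdst_m}$ swept out by $\dist_m$ on $\Omega$. Writing $A(s)$ for the area of the torus $\{\dist_m = s\}$ (which vanishes at $s = 0$ and $s = \maxdst_m$) and $q(s)$ for the $\torgrp$-invariant Schr\"{o}dinger potential associated with $\jop_m$ pulled back to $I$, the form $\nint{T}_\Omega$ becomes the weighted one-dimensional form
\[
  \tilde{u} \mapsto \int_I \bigl(|\tilde{u}'(s)|^2 - q(s)\, |\tilde{u}(s)|^2\bigr) A(s) \, ds.
\]
The Neumann condition on a boundary torus $\{\dist_m = t_{m,j}\}$ translates to $\tilde{u}'(t_{m,j}) = 0$, while at an endpoint $s = 0$ or $s = \maxdst_m$ (in the solid-torus cases) smoothness of $u$ across the singular $\torgrp$-orbit automatically forces $\tilde{u}'$ to vanish. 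In every case we arrive at a Sturm--Liouville problem on $I$ with effective Neumann conditions at both endpoints.

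Finally I would invoke Sturm's oscillation theorem: the eigenvalues of such a one-dimensional problem form a strictly increasing sequence $\lambda_1 < \lambda_2 < \cdots$ of simple eigenvalues, and the $k$-th eigenfunction has exactly $k - 1$ zeros in the interior of $I$. Since $\tilde{\nu}_m^5$ vanishes at exactly one interior point, it must be the second eigenfunction of the reduced problem, so $-3$ is the second $\torgrp$-invariant Neumann eigenvalue of $\jop_m$ on $\Omega$ and any second eigenfunction in this class is a scalar multiple of $\nu_m^5|_\Omega$. The main obstacle I anticipate is the rigorous treatment of the two solid-torus cases, in which the weight $A(s)$ vanishes linearly at an endpoint of $I$ and the resulting Sturm--Liouville equation acquires a regular singular point; this can be handled either by appealing to the general theory of Sturm--Liouville problems with regular singular endpoints, or, more concretely, by noting that any smooth $\torgrp$-invariant function extends evenly across the core circle, so the reduced problem is equivalent to a regular Sturm--Liouville problem on a suitable doubled interval to which the usual oscillation theorem applies in its standard form.
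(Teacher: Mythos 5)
Your proof is correct but takes a genuinely different route from the paper's. The paper argues directly on the manifold: having observed that the second $G$-invariant Neumann eigenvalue on $\Omega$ is at most $-3$, it takes any candidate second $G$-invariant eigenfunction $u$, uses orthogonality to the (sign-definite) first eigenfunction to locate an orbit torus $\{\dist_m = s\}$ on which $u$ vanishes, and then, if $s \neq s_{m,j}$, extends $u$ by zero across this torus to obtain a test function on one of the subdomains of Lemma~\ref{lem:first_dirichlet} with Rayleigh quotient at most $-3$ vanishing on an open set, contradicting---via unique continuation and the min-max characterization---that Lemma's conclusion that $-3$ is the first eigenvalue there; a similar comparison yields simplicity. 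You instead make the cohomogeneity-one reduction explicit, pulling the Jacobi form back to a weighted Sturm--Liouville form on an interval and invoking Sturm's oscillation theorem: since the reduced $\nu_m^5$ has exactly one interior zero and satisfies natural conditions at the endpoints, it must be the (simple) second eigenfunction. Your route is appealingly self-contained once the reduction is granted, and sidesteps Lemma~\ref{lem:first_dirichlet} and unique continuation; the paper's avoids the quotient formalism and the care required at the singular endpoints. On that last point, your ``doubled interval'' alternative is imprecise: the reflected weight $A(\abs{s})$ vanishes at the interior point $s=0$, so the doubled problem is not a regular Sturm--Liouville problem, and you would in fact need your other alternative, the oscillation theory for regular-singular (Bessel-type) endpoints, which does apply here.
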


\begin{proof}
The restriction $\nu_m^5|_\Omega$ is a $\torgrp$-invariant eigenfunction
on $\Omega$ whose nodal set consists of a single torus
$\{\dist_m=s_{m,j}\}$ for suitable 
$j\in\{1,\ldots,m-1\}$.
In particular (again e.\,g.~appealing to Proposition~\ref{pro:mr}) we know that the second eigenvalue is at most $-3$.
Note also that, without imposing $G$-invariance,
any first eigenfunction
(which incidentally must be $G$-invariant)
has a sign.
Let $u$ be a second $\torgrp$-invariant eigenfunction.
Then there exists some $s \in \interval{0,\maxdst_m}$
with $\{\dist_m=s\} \subset \Omega$
and $u|_{\{\dist_m=s\}}=0$
(since otherwise $u$ could not be orthogonal to the first eigenspace).
If $s \neq s_{m,j}$,
then, by producing an appropriate test function
via extension by zero,
we can derive a contradiction
to Lemma \ref{lem:first_dirichlet}.
In more detail,
by such extension we could produce
a test function $v$
for one of the boundary-value problems
on a domain $\Omega'$
considered in Lemma \ref{lem:first_dirichlet}
(there called $\Omega$)
such that $v$ has Rayleigh quotient at most $-3$
and vanishes on a nonempty open subset of $\Omega'$;
by the unique continuation principle $v$
cannot be an eigenfunction,
so by the min-max characterization of eigenvalues,
we would then conclude that the first eigenvalue
of $\Omega'$,
with the appropriate boundary conditions,
is strictly less than $-3$,
contradicting Lemma \ref{lem:first_dirichlet}.
Finally, since first eigenvalues have multiplicity one,
another (much similar) application of Lemma \ref{lem:first_dirichlet}
now shows that $u$ is a constant multiple of $\nu_m^5|_\Omega$.
\end{proof}

\begin{pro}\label{Prop:UpperBoundHsiang}
For each integer $m \geq 2$
the Jacobi operator $\jop_m$
acting on the space of $\torgrp$-invariant
functions on $\hsiang_m$
has at most $m$ eigenvalues (counted with multiplicity)
less than or equal to $-3$.
\end{pro}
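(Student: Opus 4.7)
The plan is to apply the Neumann-internalized upper bound from Proposition~\ref{pro:mr} to a natural $\torgrp$-invariant partition of $\hsiang_m$, with the spectral input on each piece provided by Lemma~\ref{lem:second_neumann}. Let $T$ denote the sesquilinear form associated to $\jop_m$ on $\hsiang_m$. First I treat the case $m \geq 3$ and define
\begin{equation*}
 \Omega_1 \vcentcolon= \{\dist_m \leq t_{m,1}\},\qquad
 \Omega_i \vcentcolon= \{t_{m,i-1} \leq \dist_m \leq t_{m,i}\} \text{ for } 2 \leq i \leq m-2,\qquad
 \Omega_{m-1} \vcentcolon= \{\dist_m \geq t_{m,m-2}\}.
\end{equation*}
These form a family of $m-1$ pairwise disjoint, $\torgrp$-invariant Lipschitz open subsets whose closures cover $\hsiang_m$, satisfying the hypotheses of Proposition~\ref{pro:mr} with $\sigma$ trivial and no external boundary (as $\hsiang_m$ is closed).

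On each $\Omega_i$, Lemma~\ref{lem:second_neumann} identifies $\nu_m^5|_{\Omega_i}$, up to rescaling, as the unique second $\torgrp$-invariant Neumann eigenfunction of $\jop_m$, with eigenvalue $-3$. To control the first $\torgrp$-invariant Neumann eigenvalue, I invoke the corollary preceding Lemma~\ref{lem:first_dirichlet}: the interior of each $\Omega_i$ contains exactly one nodal torus $\{\dist_m = s_{m,i}\}$ of $\nu_m^5$, so the restriction $\nu_m^5|_{\Omega_i}$ changes sign. Since first Neumann eigenvalues are simple and carry a constant-sign eigenfunction (the Perron--Frobenius-style fact already invoked in the proof of Lemma~\ref{lem:second_neumann}), $\nu_m^5|_{\Omega_i}$ cannot be a first eigenfunction; hence the first $\torgrp$-invariant Neumann eigenvalue on $\Omega_i$ is strictly less than the second, i.e.\ strictly less than $-3$. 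Combined with Lemma~\ref{lem:second_neumann} this gives
\begin{equation*}
 \dim E_{\torgrp,\sigma}^{\leq -3}\bigl(\nint{T}_{\Omega_i}\bigr) = 2
 \quad\text{and}\quad
 \dim E_{\torgrp,\sigma}^{<-3}\bigl(\nint{T}_{\Omega_i}\bigr) = 1,
 \qquad 1 \leq i \leq m-1.
\end{equation*}

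Substituting these counts into the upper-bound statement of Proposition~\ref{pro:mr} at $t = -3$ produces
\begin{equation*}
 \dim E_{\torgrp,\sigma}^{\leq -3}(T)
 \leq
 \dim E_{\torgrp,\sigma}^{\leq -3}\bigl(\nint{T}_{\Omega_1}\bigr)
 + \sum_{i=2}^{m-1} \dim E_{\torgrp,\sigma}^{<-3}\bigl(\nint{T}_{\Omega_i}\bigr)
 = 2 + (m-2) = m,
\end{equation*}
as required. The case $m = 2$ admits no such partition (no critical tori $\{\dist = t_{2,i}\}$ exist), and I would treat it separately: the $\torgrp$-invariant sector on $\hsiang_2$ reduces to a regular Sturm--Liouville problem on the quotient interval, whose eigenvalues are simple and whose $k$-th eigenfunction carries exactly $k-1$ interior zeros; since $\nu_2^5$ descends to such a function with exactly one interior zero (at $s_{2,1}$) and eigenvalue $-3$, it must be the second $\torgrp$-invariant eigenfunction, so $-3$ is the second eigenvalue with multiplicity one, yielding precisely two eigenvalues $\leq -3$.

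The main obstacle is the strict separation between the first and second $\torgrp$-invariant Neumann eigenvalues on each $\Omega_i$: Proposition~\ref{pro:mr} uses $E^{<-3}$ (rather than $E^{\leq -3}$) on all but the first piece in its upper bound, so without a genuine gap one would only obtain the weaker bound $2(m-1)$, which fails to match the lower bound of Proposition~\ref{prop:Hsiang-at-or-below-minus-3}. The saving feature is the strict interlacing $s_{m,i} < t_{m,i} < s_{m,i+1}$ from the corollary preceding Lemma~\ref{lem:first_dirichlet}, which forces each Neumann domain $\Omega_i$ to contain a single sign-change of $\nu_m^5$ in its interior and hence a genuine spectral gap there.
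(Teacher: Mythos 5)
Your proof is correct and follows essentially the same route as the paper: the same partition of $\hsiang_m$ by the critical tori $\{\dist_m = t_{m,i}\}$ into $m-1$ Neumann domains, the same invocation of Lemma~\ref{lem:second_neumann} to pin down $-3$ as the simple second $\torgrp$-invariant Neumann eigenvalue on each piece, and the same application of item~\ref{mrUpper} of Proposition~\ref{pro:mr} to conclude $\dim E_{\torgrp,\sigma}^{\leq -3}(T) \leq 2 + (m-2) = m$. Your treatment of $m=2$ via the quotient ODE and of the strict gap $\lambda_1 < -3$ on each $\Omega_i$ merely makes explicit details that the paper leaves implicit.
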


\begin{proof}
By Lemma \ref{lem:second_neumann} we have a partition of $\hsiang_m$ into $m-1$ domains 
(all of whose boundary components are smooth tori),
for each of which $-3$ is a second Neumann eigenvalue, with multiplicity one. (Note that when $m=2$ there is no actual partition, i.\,e.~we are just considering $H_2$ itself and thus no boundary conditions come into play.)
Thus, exploiting item \ref{mrUpper} of Proposition \ref{pro:mr}, we have one domain accounting for a $+2$ contribution, and exactly $m-2$ domains accounting for a $+1$ contribution, which gives the claim.
\end{proof}

We now combine together the pieces of information collected in the previous statements to prove the main result of this section.

\begin{thm}\label{thm:mainHyperspheres} For each integer $m \geq 2$ the following holds:
\begin{enumerate}[label={\normalfont(\roman*)}]
\item\label{itm:firstHsiang}
The Jacobi operator $\jop_m$ acting on the space of $\torgrp$-invariant functions on $\hsiang_m$ (cf.~Example \ref{ref:example}) has exactly $m-1$ eigenvalues (counted with multiplicity) strictly less than $-3$, and $-3$ is an eigenvalue of multiplicity $1$. 
\item\label{itm:secondHsiang}
The Morse index of $H_m$ is at least $m+4$.
\end{enumerate}
\end{thm}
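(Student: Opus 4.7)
The plan is to derive item~\ref{itm:firstHsiang} by matching the lower bound in Proposition~\ref{prop:Hsiang-at-or-below-minus-3} against the upper bound in Proposition~\ref{Prop:UpperBoundHsiang}, and then to harvest item~\ref{itm:secondHsiang} from this together with Lemma~\ref{lem:Savo}.

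For the first assertion I would argue as follows. By Proposition~\ref{prop:Hsiang-at-or-below-minus-3}, the Jacobi operator $\jop_m$ restricted to $\torgrp$-invariant functions admits at least $m-1$ eigenvalues (counted with multiplicity) strictly less than $-3$, and moreover $-3$ itself is a $\torgrp$-invariant eigenvalue of multiplicity at least one. These contributions lie in distinct (hence $L^2$-orthogonal) eigenspaces, so altogether the number of $\torgrp$-invariant eigenvalues not exceeding $-3$ is at least $m$. Proposition~\ref{Prop:UpperBoundHsiang} furnishes the matching upper bound of $m$ on this same count, so both inequalities must be sharp: exactly $m-1$ eigenvalues lie strictly below $-3$, and the eigenvalue $-3$ has multiplicity exactly $1$ in the $\torgrp$-invariant spectrum.

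For the second assertion I would drop the symmetry restriction and decompose
\[
  \Ind(\hsiang_m) = \dim E^{<0}(\jop_m) \geq \dim E^{<-3}(\jop_m) + \dim E^{=-3}(\jop_m),
\]
where the inequality just ignores any possible eigenvalues of $\jop_m$ in the interval between $-3$ and $0$, and the two summands on the right refer to orthogonal subspaces. The first summand contains the $\torgrp$-invariant eigenfunctions produced in item~\ref{itm:firstHsiang}, so it is at least $m-1$. The second summand is at least $5$, since by Lemma~\ref{lem:Savo} the five coordinate components $\nu_m^1,\ldots,\nu_m^5$ of the unit normal are linearly independent $\jop_m$-eigenfunctions with eigenvalue $-3$. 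Adding the contributions yields $\Ind(\hsiang_m) \geq (m-1) + 5 = m+4$, as claimed.

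The substantive work has already been invested upstream: the sharp upper bound of Proposition~\ref{Prop:UpperBoundHsiang}, extracted from the fine $\torgrp$-equivariant partition of $\hsiang_m$ at the critical points $t_{m,i}$ of $\widetilde{\vartheta}_m$, and the cone-based computation of Lemma~\ref{lem:Savo} that yields the five-dimensional $(-3)$-eigenspace. The present theorem is therefore an accounting step that assembles these ingredients; the only point deserving care is that the $m-1$ $\torgrp$-invariant eigenfunctions with eigenvalue strictly below $-3$ are automatically independent from the five Killing-field components at eigenvalue $-3$, since these live in eigenspaces for distinct eigenvalues.
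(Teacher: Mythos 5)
Your proof is correct and follows essentially the same route as the paper: both combine Proposition~\ref{prop:Hsiang-at-or-below-minus-3} (lower bound) and Proposition~\ref{Prop:UpperBoundHsiang} (upper bound) to pin down the $\torgrp$-invariant spectrum at and below $-3$, and both then drop the symmetry constraint and add the five-dimensional $(-3)$-eigenspace from Lemma~\ref{lem:Savo}, relying on $L^2$-orthogonality of eigenspaces for distinct eigenvalues to keep the contributions linearly independent. No gaps.
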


\begin{proof}
According to Proposition \ref{prop:Hsiang-at-or-below-minus-3},
$\jop_m$ acting on the space of $\torgrp$-invariant functions on $\hsiang_m$ has at least $m-1$ eingenvalues strictly less than $-3$, plus the eigenvalue $-3$ with multiplicity at least one (the corresponding eingenfunction being $\nu_m^5$); this fact together with the upper bound given by Proposition \ref{Prop:UpperBoundHsiang} proves \ref{itm:firstHsiang}. 
In particular, the $G$-equivariant Morse index of $\hsiang_m$ is at least $m$. 
If we lift the equivariance constraint, we still have a subspace of dimension at least $m-1$ corresponding to the eigenvalues strictly less than $-3$ plus, in direct sum, a five-dimensional subspace spanned by eigenfunctions with eigenvalue exactly equal to $-3$ (because of Lemma \ref{lem:Savo}). 
The conclusion of \ref{itm:secondHsiang} follows at once.
\end{proof}

\begin{rem}
We expect that the hypersurface $H_2$ is actually congruent to the Clifford product $\Sp^2(\sqrt{2/3})\times \Sp^1(\sqrt{1/3})$. If that is the case, then it follows in particular that $H_2$ has Morse index exactly equal to 6, thus saturating the well-known general lower bound for the index of non-equatorial minimal hypersurfaces in $\Sp^{n}$ (see e.\,g. \cite[Corollary 2.2]{Sav10}, cf. \cite{Urb90}) and so proving the sharpness of the estimate given in item \ref{itm:secondHsiang} of Theorem \ref{thm:mainHyperspheres} above.
\end{rem}

\begin{rem}[Low index examples]\label{rem:MarioNumerics}
Concerning the $\torgrp$-equivariant Morse index bounds, the result in \ref{itm:firstHsiang} of Theorem \ref{thm:mainHyperspheres} should be compared with the bound for the equatorial hyperpshere $H_1$ (a unique negative direction, corresponding to $\nu^5_1$), 
with the bound for the (conjecturally Clifford) minimal hypersurface $H_2$ (at least two, in fact most likely exactly two $G$-equivariant ``negative directions'') and, more interestingly, with the aforementioned hypertorus, henceforth denoted $T^3$, constructed in~\cite{CarSch21}. 
Numerical simulations we carried through indicate that its 
$\Ogroup(2)\times\Ogroup(2)$-equivariant index is equal to~$3$; 
heuristically, in the notation of \cite{CarSch21} the three negative directions for the Jacobi quadratic form arise from ``radial scaling'' relative to the center of the 
$r$-$\vartheta$-plane and ``translations'' in the $r$-$\vartheta$-plane (the natural quotient space of $\Sp^4$ under the group action, cf.~\cite{HsiLaw71}).
The first direction (and only it) also respects the prescribed reflections in the 
$r$-$\vartheta$-plane, thus the equivariant index of $T^3$ with respect to its 
maximal symmetry group is expected to be equal to $1$. 
\end{rem}

\section{Applications to free boundary minimal hypersurfaces in Euclidean balls}\label{sec:fbms}

Let us shift gears and move to the analysis of free boundary minimal hypersurfaces in $\B^4$. Here, the landscape is even sparser than in $\Sp^4$: we have an ``abstract'' existence result of infinitely many (pairwise non congruent) such hypersurfaces (see \cite{Wan24} and references therein), plus -- to the best of our knowledge -- just three explicit examples. 
These are the equatorial ball
(with Morse index $1$), the higher-dimensional catenoid (with Morse index $5$, see \cite{SSTZ21}) and the $\Ogroup(2)\times\Ogroup(2)$-invariant examples constructed in \cite{FGM} (whose Morse index we wish to study in this section).

So let $\torgrp$ denote the group
(isomorphic to $\Ogroup(2) \times \Ogroup(2)$)
of all isometries of $\R^4$ setwise preserving
each of the planes $\{x^3=x^4=0\}$ and $\{x^1=x^2=0\}$.
The origin is fixed by $G$,
and each point in either $\{x^3=x^4=0\}$
or $\{x^1=x^2=0\}$ has orbit a circle in the same plane;
every other orbit is a torus.

Define in $\R^4$ the $\torgrp$-invariant set
\begin{equation*}
  \ccone
  \vcentcolon=
  \{
    (x^1,x^2,x^3,x^4) \in \R^4
    \st
    (x^1)^2+(x^2)^2=(x^3)^2+(x^4)^2
  \},
\end{equation*}
that is a minimal cone in $\R^4$ with link a Clifford torus in the unit $3$-sphere. 
In \cite{Alencar} Alencar constructed a complete, embedded $\torgrp$-invariant minimal hypersurface $\alencar$,
homeomorphic to $\R^2 \times \Sp^1$,
containing precisely one circular orbit of $\torgrp$,
asymptotic to $\ccone$,
and having nontrivial intersection with $\ccone$
outside every compact subset of $\R^4$ (see the statement of Theorem 1.1 therein).

We pick a unit normal $\nu=(\nu^1,\nu^2,\nu^3,\nu^4)$
for $\alencar$, and we write $\jop$ for its Jacobi operator.
Then $x^i\nu_i$, the normal component of the conformal Killing field $x^i\partial_i$
generating dilations about the origin,
is a Jacobi field on $\alencar$, which means that $\jop(x^i\nu_i)=0$. 
(Here and below we shall employ the standard sum convention for repeated indices, always understood with respect to the background Euclidean metric.)
We further define $\dist \colon \alencar \to \R$
to be the intrinsically defined distance function
from the unique circular orbit of $\torgrp$ on $\alencar$;
then for each $s>0$ the set $\{\dist=s\}$ is a torus
and $\{\dist \leq s\}$ is a solid torus.

We define also the function
$\vartheta \colon \R^4 \setminus \{(0,0,0,0)\} \to \R$
to be the directed distance in the origin-centered $3$-sphere
of radius $\abs{x}$ from $\ccone$,
increasing toward $\{x^1=x^2=0\}$,
normalized by $\abs{x}$:
\begin{equation*}
  \vartheta(x^1,x^2,x^3,x^4)
  \vcentcolon=
  \arcsin
    \sqrt{\frac{\sum_{i=3}^4 (x^i)^2}{\sum_{i=1}^4 (x^i)^2}}
  -\frac{\pi}{4}.
\end{equation*}
Then $\vartheta$ is smooth with nowhere vanishing gradient
away from $\{x^3=x^4=0\} \cup \{x^1=x^2=0\}$,
and $\vartheta$ is constant on each orbit of $\torgrp$
and on each orbit of $x^i\partial_i$.

The following lemmata 
now follow easily from the preceding definitions,
in parallel to the analysis of the Hsiang hypersurfaces in $\Sp^4$ that was carried through in Section \ref{sec:closed}.

\begin{lem}
The critical points of $\vartheta|_\alencar$
are the nodal points of $x^i\nu_i$.
\end{lem}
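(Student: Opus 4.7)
The plan is to follow the proof of Lemma~\ref{lem:CritChar} essentially verbatim, with the dilation field $x^i\partial_i$ playing the role there taken by $\partial_5^\top$. First I would observe that, by Alencar's description of $\alencar$ — with its unique circular $\torgrp$-orbit lying in one of the two planes $\{x^3=x^4=0\}$, $\{x^1=x^2=0\}$ where $\vartheta$ fails to be defined — the function $\vartheta|_\alencar$ is smooth on the complement of that circle, so critical points of $\vartheta|_\alencar$ only need to be analyzed there.

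On the circular orbit itself, $\torgrp$-equivariance pins $\nu|_p$ down to be proportional to $x^i\partial_i|_p$. Concretely, the isotropy in $\torgrp$ of any such $p$ contains a copy of $\Ogroup(2)$ acting by rotations on the $2$-plane transverse to the plane housing the orbit, and the only $3$-dimensional $\Ogroup(2)$-invariant subspace of $\R^4$ containing the tangent line to the orbit is the direct sum of that line with the transverse $2$-plane, forcing $\nu|_p$ to lie in the remaining (radial) direction. In particular $(x^i\nu_i)(p)\neq 0$ on the circular orbit, so the nodal set of $x^i\nu_i$ is also contained in the complement of that orbit, consistent with the claim.

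At any $p$ in this complement the argument becomes purely linear-algebraic: $p$ is a critical point of $\vartheta|_\alencar$ if and only if $\nabla\vartheta|_p$ is proportional to $\nu|_p$. Since $\vartheta$ is constant on each $\torgrp$-orbit (generically a torus) and on each dilation orbit, $\nabla\vartheta|_p$ is perpendicular both to the tangent $2$-plane of the torus through $p$ and to $x^i\partial_i|_p$. But $x^i\partial_i|_p$ is itself perpendicular to that torus (which sits on the origin-centered sphere of radius $|x|$), so both $\nu|_p$ and $\nabla\vartheta|_p$ lie in the same $2$-plane orthogonal to the torus tangent, within which $\nabla\vartheta|_p$ is characterized (up to sign) as the direction perpendicular to $x^i\partial_i|_p$. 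Consequently $\nu|_p\parallel\nabla\vartheta|_p$ if and only if $\nu|_p\perp x^i\partial_i|_p$, i.e.\ $(x^i\nu_i)(p)=0$, as required. I anticipate no real obstacle: the only mild care beyond Lemma~\ref{lem:CritChar} is the appeal to Alencar's structural result to identify where $\vartheta|_\alencar$ ceases to be smooth.
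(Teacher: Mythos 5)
Your proof is correct and follows the same approach as the paper, which treats this lemma (together with the neighboring one) as a direct analogue of Lemma~\ref{lem:CritChar} for the closed Hsiang hypersurfaces, with the dilation field $x^i\partial_i$ replacing the Killing projection $\partial_5^\top$. The extra detail you supply on the circular orbit — identifying $\nu|_p$ as radial via the $\Ogroup(2)$ isotropy — is a correct elaboration of what the paper states more tersely there.
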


\begin{lem}
There exists a strictly increasing, diverging sequence
$\{s_m\}_{m=1}^\infty$ of reals such that
\begin{equation*}
  \{x\in A \st x^k\nu_k(x)=0\} = \bigcup_{m=1}^\infty \{\dist=s_m\},
\end{equation*}
a union of disjoint $\torgrp$-invariant tori.
\end{lem}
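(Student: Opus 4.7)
The plan is to mimic the analysis carried out in Section~\ref{sec:closed} for the Hsiang hypersurfaces, adapted to the non-compact setting. Since $\vartheta$ is $\torgrp$-invariant and $\dist$ descends to a homeomorphism of the orbit space $\alencar/\torgrp$ onto $[0,\infty)$ (with the unique circular $\torgrp$-orbit corresponding to $0$), I would define a continuous function $\widetilde{\vartheta}\colon[0,\infty)\to\R$ via $\vartheta|_\alencar=\widetilde{\vartheta}\circ\dist$, smooth on $(0,\infty)$. Observe that $\widetilde{\vartheta}(0)=\pm\pi/4\neq 0$, since the circular $\torgrp$-orbit on $\alencar$ lies in one of the planes $\{x^3=x^4=0\}$ or $\{x^1=x^2=0\}$, on each of which $\vartheta\equiv\mp\pi/4$. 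By the preceding lemma, a point $p\in\alencar$ with $\dist(p)>0$ is a nodal point of $x^k\nu_k$ if and only if $\dist(p)$ is a critical point of $\widetilde{\vartheta}$.

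The next step is to establish three facts paralleling those appearing in Section~\ref{sec:closed}: (a) $\vartheta|_\alencar$ has no critical point on $\ccone$, for otherwise $\alencar$ would be tangential to $\ccone$ along a torus and the unique continuation principle for the minimal surface equation would force $\alencar=\ccone$; (b) $\alencar\cap\ccone$ is a disjoint union of $\torgrp$-invariant tori sitting at a strictly increasing sequence of distances $\{r_m\}_{m=1}^\infty\subset(0,\infty)$ diverging to $\infty$, which combines the transversality granted by (a), the $\torgrp$-invariance of both $\alencar$ and $\ccone$ (whence each intersection component is a $\torgrp$-orbit, in fact a torus since neither set meets the fixed-point axes of $\torgrp$ in $\R^4\setminus\{0\}$), and Alencar's property that $\alencar\cap\ccone$ is nonempty outside every compact subset of $\R^4$; (c) at any critical point $s$ of $\widetilde{\vartheta}$ the sign of $\widetilde{\vartheta}''(s)$ is opposite to that of $\widetilde{\vartheta}(s)$, the argument being verbatim the Hsiang one, with the sphere $\{x^5=\text{const}\}\subset\Sp^4$ replaced by the origin-centered Euclidean $3$-sphere $S_r\subset\R^4$ of radius $r=|p|$: at such a critical point $\nu|_p$ is tangent to $S_r$, the $\torgrp$-orbits on $S_r$ are tori whose areas strictly increase toward $S_r\cap\ccone$, and minimality of $\alencar$ in $\R^4$ together with this area monotonicity yields the conclusion.

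Granted (a), (b), and (c), an elementary real-analysis argument identical to the one leading to the corollary in Section~\ref{sec:closed} shows that $\widetilde{\vartheta}$ has exactly one critical point in each interval $(r_m,r_{m+1})$ and no critical point in $(0,r_1)$: indeed any putative critical point in $(0,r_1)$ would be a local extremum whose value and second derivative had opposite signs, incompatible with $\widetilde{\vartheta}$ having constant sign on $[0,r_1]$ as dictated by $\widetilde{\vartheta}(0)\neq 0$. Labelling the resulting critical points $\{s_m\}_{m=1}^\infty$ so that $r_m<s_m<r_{m+1}$, the sequence is strictly increasing and divergent, and by the preceding lemma the nodal set of $x^k\nu_k$ on $\alencar$ is precisely $\bigcup_{m=1}^\infty\{\dist=s_m\}$, a disjoint union of $\torgrp$-invariant tori. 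The hardest step will be justifying the discreteness and divergence in (b): while the geometric intuition makes them plausible, a fully rigorous proof may require revisiting the ODE analysis performed by Alencar in~\cite{Alencar} to extract quantitative asymptotics of the intersections with~$\ccone$.
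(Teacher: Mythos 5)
Your proposal follows the same route the paper intends: the paper explicitly declares that this lemma and its companion ``follow easily from the preceding definitions, in parallel to the analysis of the Hsiang hypersurfaces,'' and your items (a), (b), (c) are precisely the free-boundary analogues of the corresponding lemmata in the closed case (no critical points of $\vartheta$ on the football/cone, the intermediate-value argument producing a nodal torus between consecutive intersections with the football/cone, and the opposite-signs lemma). The only place where you are overcautious is the claimed difficulty in (b): you need not revisit Alencar's ODE analysis to obtain discreteness and divergence. Once (a) is established, $\widetilde\vartheta'$ is nonvanishing at every zero of $\widetilde\vartheta$, so those zeros are isolated; they also cannot accumulate at a finite value $r^*$, since continuity would force $\widetilde\vartheta(r^*)=0$ while accumulation would force $\widetilde\vartheta'(r^*)=0$, again contradicting (a). Alencar's theorem then supplies zeros of $\widetilde\vartheta$ outside every compact set, so the zero set is a strictly increasing diverging sequence $\{r_m\}$, and the interleaved critical-point sequence $\{s_m\}$ inherits both properties via $r_m<s_m<r_{m+1}$. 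The remaining ingredients of your argument --- the observation that $\widetilde\vartheta(0)=\pm\pi/4\neq 0$ rules out a critical point in $(0,r_1)$, the identification of critical points of $\widetilde\vartheta$ with nodal tori of $x^k\nu_k$ via the preceding lemma, and the opposite-signs argument (c) with the constant-$x^5$ sphere replaced by the origin-centered Euclidean sphere of radius $|p|$ --- all transfer verbatim and are correctly stated.
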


We need just one more fact, which will ensure that each origin-centered sphere in $\R^4$ intersects $\alencar$ in at most one $\torgrp$-orbit.

\begin{lem}
The set of critical points of the restriction to $\alencar$ of the distance function $\abs{x}$ in $\R^4$ is $\{\dist=0\}$.
\end{lem}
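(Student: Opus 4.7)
I would handle the two inclusions in turn. For $\{\dist=0\}$ lying in the critical set, observe first that since $\nabla^{\alencar}(\abs{x}^2)$ is the tangential projection of $2x$, a point $p\in\alencar$ is critical for $\abs{x}\big|_\alencar$ (equivalently, for $\abs{x}^2\big|_\alencar$, since $\alencar$ avoids the origin) precisely when the position vector $x(p)$ is normal to $\alencar$ at $p$. On the unique circular $\torgrp$-orbit, which we may assume lies in $\{x^3=x^4=0\}$, the $\torgrp$-symmetry constrains $T_p\alencar$ to be the sum of the tangent to the circle and the full $(x^3,x^4)$-plane; the position vector $x(p)$, radial within the $(x^1,x^2)$-plane, is orthogonal to both summands, so $p$ is critical.

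For the reverse inclusion, I would exploit the identity $\Delta_\alencar(\abs{x}^2)=6$, which follows from $\mathrm{Hess}_{\R^4}(\abs{x}^2)=2I$, minimality of $\alencar$, and $\dim\alencar=3$, in two complementary ways. Since each level set $\{\dist=\rho_0\}$ with $\rho_0>0$ is a single $\torgrp$-orbit (a torus), the $\torgrp$-invariant function $\abs{x}^2$ descends to a smooth function $F\colon[0,\infty)\to\R$ with $F(\rho)\to\infty$ as $\rho\to\infty$ (since $\alencar$ is asymptotic to $\ccone$) and $F'(0)=0$ (from the first part). At any positive zero $\rho_1$ of $F'$ the corresponding $2$-torus orbit lies entirely in the critical set by $\torgrp$-invariance, so the intrinsic Hessian $\mathrm{Hess}_\alencar(\abs{x}^2)(X,Y)=2\sk{X,Y}-2\sk{x,\II(X,Y)}$ kills all orbit directions; combined with its constant trace equal to $6$, this pins the remaining transverse eigenvalue, and hence $F''(\rho_1)$, to $6$.

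To close, I would argue by contradiction: if $\rho_1$ is the smallest positive zero of $F'$, then $F''(\rho_1)=6>0$ makes $\rho_1$ a strict local minimum of $F$, so $F'$ is negative just below $\rho_1$, and by the minimality of $\rho_1$ this sign persists throughout $(0,\rho_1)$. Hence $F(\rho)<F(0)$ for every $\rho\in(0,\rho_1)$, forcing $\abs{x}^2<F(0)$ on a punctured neighborhood of the central circle; this exhibits $\{\dist=0\}$ as a set of interior local maxima of the strictly subharmonic function $\abs{x}^2\big|_\alencar$, contradicting the strong maximum principle. The main technical hinge is the twofold use of $\Delta_\alencar(\abs{x}^2)=6$: subharmonicity rules out a local maximum on the central circle, while the trace identity at critical points forces any alternative minimum to be strict, and together these pin down the one-variable profile $F$ without any appeal to Alencar's specific ODE.
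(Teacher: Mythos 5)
Your argument is correct and takes a genuinely different route from the paper's. The first inclusion ($\{\dist = 0\}$ critical) is essentially the same in both cases, relying on the representation-theoretic constraint on $T_p\alencar$ imposed by the stabilizer $\Ogroup(2)$. For the converse, however, the paper works directly with $\abs{x}$ rather than $\abs{x}^2$: at any critical orbit it observes that $\alencar$ is tangent there to a Euclidean sphere, uses the convexity of the sphere to see that the partial trace of $\II$ over the orbit directions points radially inward, and then invokes minimality to conclude $\abs{x}|_\alencar$ increases transversally to the orbit; after a separate argument handling the degenerate circular orbit (where the remaining symmetry forces $\II$ to be transversally ``round''), it closes with a one-dimensional min-max on a path between two critical orbits. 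You instead exploit the single identity $\Delta_\alencar(\abs{x}^2) = 6$ in two ways: the trace identity, together with the vanishing of the Hessian on orbit directions at a critical orbit, pins the transverse second derivative of the profile $F$ to exactly $6$; and strict subharmonicity of $\abs{x}^2|_\alencar$ forbids an interior local maximum. Combining these, the smallest positive critical radius $\rho_1$ (if one existed) would force $F' < 0$ on $(0,\rho_1)$ and hence $F < F(0)$ near zero, making the central circle a local maximum, contradiction. This bypasses both the separate analysis at the circular orbit and the one-dimensional min-max argument; it buys a shorter endgame at the (small) cost of some regularity bookkeeping, such as smoothness of $F$ at $\rho = 0$ and the existence of a smallest positive zero of $F'$. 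The latter is not spelled out, but follows from the same computation: since $F'' = 6$ at \emph{every} positive zero of $F'$, any two such zeros are simple sign changes of the same type, so an intermediate-value argument rules out a second one, and a fortiori rules out accumulation toward $0$. (The sign in your formula for $\mathrm{Hess}_\alencar(\abs{x}^2)$ is opposite the standard convention, but that has no bearing on the trace identity you actually use.)
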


\begin{proof}
First note that $\abs{x}|_{\alencar}$ is smooth,
since $\alencar$ omits the origin.
The $\torgrp$-invariance of $\alencar$
and the fact that $\{\dist=0\}$ is a circular $\torgrp$-orbit
uniquely determine the normal line in $\R^4$
to $\alencar$ at any point on $\{\dist=0\}$:
it is radial (intersects the origin).
Consequently, $\{\dist=0\}$
consists of critical points
of $\abs{x}|_{\alencar}$.

Next, at any (potential) critical point $y \in \alencar$ of $\abs{x}|_\alencar$ not belonging to  $\left\{\rho=0\right\}$
we have tangency of $\alencar$
to the sphere $\{x\in\R^4\st \abs{x}=\abs{y}\}$.
Since the $\torgrp$-orbit of $y$
is contained in this sphere,
it follows from the convexity of the latter
that the partial trace of the vector-valued second fundamental form
of $\alencar$ at $y$ over the tangent space
to the $\torgrp$-orbit
is nonzero and directed toward the origin.
The minimality of $\alencar$ then forces the value of $\abs{x}|_{\alencar}$
to be strictly increasing as one moves away from the point $y$ in the direction
orthogonal to the $\torgrp$-orbit. 

Now, consider instead a critical point $y$ on the $\left\{\rho=0\right\}$ orbit. Here $\alencar$ has vector-valued second fundamental form $\II_y(v,v)$ directed radially inward for $v$ tangent to the orbit (as for the toroidal orbits considered above). 
The two-dimensional subspace of $T_y A$ orthogonal to $v$ is invariant under the subgroup of $\torgrp$ that fixes $y$ (a copy of $\Ogroup(2)$). 
By the symmetry we have $\II_y(u,u)=\II_y(w,w)$ for any unit vectors $u$ and $w$ in this subspace. 
By the minimality of $A$ we must then have $\II_y(u,u)$ directed radially outward for every such $u$, and so also in this case $\abs{x}|_{\alencar}$ is strictly increasing as one moves away from the point $y$ in directions orthogonal to the $\torgrp$-orbit. 

Finally, if there were a critical point off $\{\dist=0\}$,
then (by virtue of the $\torgrp$-invariance)
we could find a smooth path on $\alencar$,
orthogonal to every $\torgrp$-orbit it intersects,
joining a critical point on $\{\dist=0\}$
to a critical point on another orbit.
By the conclusion of the preceding paragraphs
each endpoint is a strict local minimizer
of the restriction of $\abs{x}$ to the path in question,
and so, in between, by a standard (one-dimensional) min-max argument this restriction would have
a third critical point which is not a strict local minimizer,
in contradiction to the conclusion of the preceding paragraph.
\end{proof}

In fact, it follows at once from the previous argument that the restriction to $\alencar$ of the ambient distance from the origin is a (well-defined) monotone function of $\dist$, and since they both tend to $+\infty$, necessarily a nondecreasing one. Hence:

\begin{cor}
There exists a strictly increasing, diverging sequence
$\{r_m\}_{m=1}^\infty \subset \R$
such that
\begin{equation*}
\bigl\{x \in \alencar \st  x^k\nu_k(x)=0\bigr\}
  =
  \bigcup_{m=1}^\infty
    \{x \in \alencar \st \abs{x}=r_m\}, 
\end{equation*}
a union of disjoint $\torgrp$-invariant tori.
\end{cor}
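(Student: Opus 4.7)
The strategy is to combine the characterization of the nodal set of $x^k\nu_k$ in terms of $\dist$ (from the preceding second lemma of this section) with the fact that $\abs{x}|_\alencar$, viewed on the one-dimensional orbit space, is a strictly increasing function of $\dist$. Once this monotonicity is in hand, the sequence $\{r_m\}$ is defined simply by transporting $\{s_m\}$ under the change of parameter.

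\textbf{Step 1: reduce $\abs{x}|_\alencar$ to a function of $\dist$.} Both $\abs{x}$ and $\dist$ are $\torgrp$-invariant on $\alencar$, and for each $s>0$ the level set $\{\dist=s\}$ is a single $\torgrp$-orbit, namely a torus; for $s=0$ it is the unique circular orbit. Hence there is a well-defined continuous function $f\colon \IntervaL{0,\infty} \to \R$ satisfying $\abs{x}(p)=f(\dist(p))$ for every $p\in\alencar$, smooth on $\interval{0,\infty}$ by smoothness of $\abs{x}|_\alencar$ away from the singular orbit.

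\textbf{Step 2: monotonicity of $f$.} The preceding lemma states that the critical set of $\abs{x}|_\alencar$ is precisely $\{\dist=0\}$. Since a critical point of $f$ at some $s>0$ would produce a torus of critical points of $\abs{x}|_\alencar$ at positive $\dist$, we conclude $f'(s)\neq 0$ for every $s>0$. Thus $f$ is strictly monotone on $\interval{0,\infty}$, and by continuity strictly monotone on $\IntervaL{0,\infty}$. Because $\alencar$ is non-compact and asymptotic to the cone $\ccone$ (which escapes every ball about the origin), the function $\abs{x}|_\alencar$ is unbounded; as $\dist$ is the intrinsic distance from the unique circular orbit on the complete surface $\alencar$, the set $\{\dist \leq s\}$ is compact for each $s$, which forces $f(s)\to\infty$ as $s\to\infty$. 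Combined with strict monotonicity, $f$ must be strictly increasing.

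\textbf{Step 3: transporting the sequence.} By the second lemma of this section, the nodal set of $x^k\nu_k$ on $\alencar$ is $\bigcup_{m=1}^\infty \{\dist = s_m\}$ for a strictly increasing diverging sequence $\{s_m\}_{m=1}^\infty$ with $s_m>0$. Set $r_m \vcentcolon= f(s_m)$. Since $f$ is a strictly increasing homeomorphism of $\IntervaL{0,\infty}$ onto $\IntervaL{f(0),\infty}$ with $f(s)\to\infty$, the sequence $\{r_m\}$ is strictly increasing and diverging, and $\{\dist=s_m\}=\{p\in\alencar \st \abs{x}(p)=r_m\}$. Substituting this identity into the conclusion of the second lemma yields the desired equality.

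The only genuine content beyond routine verification is the strict monotonicity of $f$ in Step 2, and this follows essentially for free from the previously established lemma together with the non-compactness of $\alencar$; so no substantial obstacle is anticipated.
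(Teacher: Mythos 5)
Your proof is correct and follows the same route the paper sketches in the sentence immediately preceding the corollary: reduce $\abs{x}|_A$ to a single-variable function $f$ of $\dist$ using $G$-invariance and the orbit structure, invoke the preceding lemma on critical points of $\abs{x}|_A$ to get strict monotonicity of $f$ on $\interval{0,\infty}$, deduce from unboundedness that $f$ is strictly increasing and diverges, and finally set $r_m = f(s_m)$. The paper compresses this to one line; you have usefully filled in the details, including the (needed, and slightly glossed over in the paper's ``nondecreasing'' phrasing) strict monotonicity that guarantees the $r_m$ are distinct.
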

In particular it follows that for each integer $\ell \geq 1$ the set
\begin{equation}
\label{eqn:fbmTorus}
\alencar_\ell\vcentcolon=
\frac{1}{r_\ell}\bigl(\alencar \cap \{\abs{x} \leq r_\ell\}\bigr)
\end{equation}
is a free boundary minimal solid torus in $\B^4$.
These surfaces are precisely the solid tori
$\Sigma_{2,2,\ell}$ of \cite{FGM}.

\begin{thm}\label{thm:mainTori}
For each integer $\ell\geq 1$ the free boundary minimal solid torus $\alencar_\ell \subset \B^4$
defined in \eqref{eqn:fbmTorus} 
has $\torgrp$-equivariant Morse index at least $\ell$.
\end{thm}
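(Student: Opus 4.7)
The plan is to mirror the architecture of Section \ref{sec:closed}, combining the partitioning estimate of Proposition \ref{pro:mr} with the Dirichlet-to-Robin comparison of Proposition \ref{pro:RobDir}. The workhorse is the Jacobi field $\Phi \vcentcolon= x^k \nu_k$, the normal component of the Euclidean dilation Killing field, which satisfies $(\Delta + \abs{A}^2)\Phi = 0$ on $\alencar$ and therefore, after the rescaling defining $\alencar_\ell$, on $\alencar_\ell$ as well. By the Corollary preceding the theorem, the nodal set of $\Phi$ on $\alencar_\ell$ consists of exactly $\ell$ tori, at radii $r_1/r_\ell < \ldots < r_{\ell-1}/r_\ell < 1$, the last of which coincides with $\partial\alencar_\ell$ (by the very definition of $r_\ell$). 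These $\ell-1$ interior nodal tori partition $\alencar_\ell$ into $\ell$ open nodal domains $\Omega_1, \ldots, \Omega_\ell$: the innermost $\Omega_1$ is a solid torus containing the circular $\torgrp$-orbit $\{\dist=0\}$, while each subsequent $\Omega_i$ is a toroidal annulus (with $\Omega_\ell$ meeting the free boundary of $\alencar_\ell$). On each $\Omega_i$, the restriction $\Phi|_{\Omega_i}$ is sign-definite in the interior, vanishes on all of $\partial\Omega_i$, and solves the Jacobi equation, so it is a first Dirichlet eigenfunction of the form $\dint T_{\Omega_i}$ with eigenvalue $0$, necessarily of multiplicity one. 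Moreover, since $\sgn_\nu$ is trivial for $\torgrp$ (as noted after Example \ref{ref:example}) and $\torgrp$ acts by orthogonal transformations, one computes $\Phi(g.p) = \langle gp, g_*\nu(p)\rangle = \langle p, \nu(p)\rangle = \Phi(p)$, so $\Phi$ is $\torgrp$-invariant.

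Next I would write $T_\rob$ for the free boundary stability form on $\alencar_\ell$, given by $q = \abs{A}^2$ and Robin potential $r \equiv 1$ on $\partial\alencar_\ell$ (since the second fundamental form of $\partial\B^4$ is the identity), and $T_\dir$ for the analogous form with homogeneous Dirichlet condition on the entire boundary. Fix $\epsilon > 0$ strictly smaller than the smallest positive eigenvalue of $T_\dir$, so that $E_\torgrp^{<\epsilon}(T_\dir) = E_\torgrp^{\leq 0}(T_\dir)$. Applying item \ref{mrLower} of Proposition \ref{pro:mr} to $T_\dir$ with the partition $\{\Omega_i\}_{i=1}^\ell$ and parameter $t = \epsilon$, one obtains
\begin{equation*}
  \dim E_\torgrp^{<\epsilon}(T_\dir)
  \geq
  \dim E_\torgrp^{<\epsilon}(\dint T_{\Omega_1})
    + \sum_{i=2}^\ell \dim E_\torgrp^{\leq \epsilon}(\dint T_{\Omega_i})
  \geq 1 + (\ell - 1) = \ell,
\end{equation*}
since the first eigenvalue of each $\dint T_{\Omega_i}$ equals $0 < \epsilon$ and the corresponding eigenfunction $\Phi|_{\Omega_i}$ is $\torgrp$-invariant. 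Hence $\dim E_\torgrp^{\leq 0}(T_\dir) \geq \ell$.

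Finally, Proposition \ref{pro:RobDir} applied with $\lambda = 0$ yields
\begin{equation*}
  \Ind_\torgrp(\alencar_\ell)
  = \dim E_\torgrp^{<0}(T_\rob)
  \geq \dim E_\torgrp^{\leq 0}(T_\dir)
  \geq \ell,
\end{equation*}
closing the argument.

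The main delicate point is the identification of $\Phi|_{\Omega_i}$ as a first Dirichlet eigenfunction (for which one invokes the sign-definiteness, immediate from the construction as a nodal domain, together with simplicity of the first Dirichlet eigenvalue) and, more subtly, the choice of the shift parameter $\epsilon$: applying Proposition \ref{pro:mr} naively at $t = 0$ would yield only $\dim E^{<0}(T_\dir) \geq \ell - 1$, missing the last eigenvalue sitting at $0$; the perturbation $\epsilon > 0$ is precisely what lets us capture it, and Proposition \ref{pro:RobDir} is then what converts this boundary eigenvalue into a strictly negative eigenvalue of the Robin form $T_\rob$. Beyond these two observations, the proof is a direct application of the machinery of Section \ref{sec:theory}, no new technical ingredient being required.
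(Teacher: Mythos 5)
Your proof is correct and takes essentially the same approach as the paper: partition $\alencar_\ell$ into the $\ell$ nodal domains of the $\torgrp$-invariant Jacobi field $x^k\nu_k$, feed this partition into item \ref{mrLower} of Proposition \ref{pro:mr} to count nonpositive Dirichlet eigenvalues, and then pass from Dirichlet to Robin via Proposition \ref{pro:RobDir}. The only cosmetic difference is that the paper applies Proposition \ref{pro:mr} at $t=0$ to get $\ell-1$ strictly negative eigenvalues and then adjoins $x^k\nu_k$ itself as an orthogonal eigenfunction with eigenvalue $0$, whereas you absorb that last count by running the estimate at a small positive threshold $\epsilon$; both bookkeeping devices land on $\dim E_{\torgrp}^{\leq 0}(T_\dir)\geq\ell$ and the remainder is identical.
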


\begin{proof}
The $\torgrp$-invariant Jacobi field $x^k\nu_k$ has $\ell$ nodal domains
and satisfies the homogeneous Dirichlet condition on $\partial \alencar_\ell$.
It then follows (from item \ref{mrLower} of Proposition \ref{pro:mr}, applied for $q$ equal to the squared length of the second fundamental form of $A_{\ell}$) that the Jacobi operator for $\alencar_\ell$
acting on $\torgrp$-invariant functions (equivalently:~$\torgrp$-equivariant functions, cf.~Example~\ref{ref:example})
subject to the homogeneous Dirichlet condition
has at least $\ell-1$ strictly negative eigenvalues
(counted with multiplicity),
and so, including $x^k\nu_k$ itself,
at least $\ell$ nonpositive eigenvalues.
At this stage the conclusion comes by simply appealing to Proposition \ref{pro:RobDir}, which allows us to compare the preceding bound with the geometrically relevant one where oblique boundary conditions are imposed (for $r\equiv 1$).
\end{proof}

\begin{rem}\label{rem:asym}
We explicitly note that Proposition \ref{pro:RobDir}, applied to $A_{\ell}$
at any threshold level $\lambda\in\R$ (in lieu of $\lambda=0$), implies that the eigenvalues shift downward
in changing from the homogeneous Dirichlet
to the natural Robin boundary condition (irrespective of the sign of the Robin potential); for this reason we do not obtain an upper bound
in the same fashion as for the closed Hsiang hypersurfaces.
\end{rem}

\clearpage
\setlength{\parskip}{1ex plus 1pt minus 1pt}
\bibliography{minimalsurfaces-bibtex}

\printaddress

\end{document}